\newtheorem{theorem}{Theorem}[section]
\newtheorem{lemma}[theorem]{Lemma}
\newtheorem{proposition}[theorem]{Proposition}
\newtheorem{corollary}[theorem]{Corollary}
\newtheorem{remark}[theorem]{Remark}
\newtheorem{definition}[theorem]{Definition}
\numberwithin{equation}{section}
\begin{document}

\newcommand{\QQ}{\mathbb{Q}}
\newcommand{\NN}{\mathbb{N}}
\newcommand{\GG}{\mathbb{G}}
\newcommand{\cc}{\mathfrak{c}}
\newcommand{\N}{\mathbb{N}}
\newcommand{\Q}{\mathbb{Q}}
\newcommand{\C}{\mathbb{C}}
\newcommand{\Z}{\mathbb{Z}}
\newcommand{\R}{\mathbb{R}}
\newcommand{\T}{\mathbb{T}}
\newcommand{\I}{\mathcal{I}}
\newcommand{\J}{\mathcal{J}}
\newcommand{\A}{\mathcal{A}}
\newcommand{\HH}{\mathcal{H}}
\newcommand{\K}{\mathcal{K}}
\newcommand{\B}{\mathcal{B}}
\newcommand{\st}{*}
\newcommand{\PP}{\mathbb{P}}
\newcommand{\SSS}{\mathbb{S}}
\newcommand{\forces}{\Vdash}
\newcommand{\dom}{\text{dom}}
\newcommand{\osc}{\text{osc}}
\newcommand{\irr}[1]{irr({#1})}
\newcommand\encircle[1]{%
  \tikz[baseline=(X.base)] 
    \node (X) [draw, shape=circle, inner sep=0] {\strut #1};}

\title{Uncountable almost Irredundant Sets in nonseparable C*-algebras}
\author{Clayton Suguio Hida}

%\subjclass[2010]{54D30,  03E35, 46B25}
\begin{abstract} In this article, we consider the notion of almost irredundant sets: A subset $\mathcal{X}$ of a C*-algebra $\A$ is called almost irredundant if and only if for every $a\in \mathcal{X}$, the element $a$ does not belong to the norm-closure of 
$$\{\sum_{i=1}^n \lambda_i \prod_{j=1}^{n_i}a_{{i,j}}: \textrm{ where } a_{{i,j}} \in \mathcal{X}\setminus\{a\} \textrm{ and} \sum |\lambda_i|\leq 1\}.$$
Since every almost irrredundant set is in particular a discrete set, it follows that the density of $\A$ is an upper bound for  the size of almost irredundant sets.  We prove that under the Proper Forcing Axiom (PFA), there is an uncountable almost irredundant set in every C*-algebra with an uncountable increasing sequence of ideals. In particular, assuming PFA, every nonseparable scattered C*-algebra admits an uncountable almost irredundant set.
 
% - For every $\alpha, \beta \in a_{\xi}$ $\big ((\alpha,j,k)\in a_{\xi,m,n}^p$ iff $(\beta,j,k)\in a_{\xi,m,n}^p\big )$

\smallskip
\noindent \textbf{Keywords.} Almost irredundant sets, scattered C*-algebras, Martin's Axiom (MA), Proper Forcing Axiom (PFA).

\end{abstract}

\maketitle
\tableofcontents

\section{Introduction}
The notion of irredundant sets in C*-algebras was introduced in \cite{hida2018large}:
\begin{definition}Let $\mathcal{A}$ be a C*-algebra. A subset $\mathcal X\subseteq \mathcal{A}$
 is called irredundant  if and only if for every $a \in \mathcal X$, the 
 C*-subalgebra of $\mathcal{A}$ generated by $\mathcal X\setminus\{a\}$ 
 does not contain $a$.
\end{definition}
Since every irredundat set is in particular a discrete set, it follows that separable C*-algebras admits only countable irredundant sets and it is natural to ask whether every nonseparable C*-algebra admits an uncountable irredundant set. For commutative C*-algebras, the classical example of $C(K)$ where $K$ is the Kunen space is a consistent example of a nonseparable commutative C*-algebra without uncountable irredundant set.

In \cite{hida2018large}, the authors asked if the noncommutativity could take an important role in the existence of an uncountable irredundant set. Assuming the Diamond Principle $\Diamond$, they constructed a nonseparable fully-noncommutative scattered\footnote{See \cite{jensen1978scattered, jensen1979scattered, wojtaszczyk1974linear, lin1989structure, kusuda2012c, tomiyama1963characterization, ghasemi2018noncommutative} for the definition and main results about scattered C*-algebras.} C*-algebra without uncountable irredundant set (see Theorem 6.2 of \cite{hida2018large}).

On the other hand, we do not know if it is consistent that every nonseparable C*-algebra admits an uncountable irredundant set. Even in the commutative case, the question remains open. But, if we restrict the question to the class of commutative scattered C*-algebras, then we have the following\footnote{Actually, the result holds for every 0-dimensional compact spaces.}:

\begin{theorem}[S.Todorcevic \cite{irrstevo93,quocientstevo}]Assume Martin's Axiom. Then every nonseparable commutative C*-algebra of the form $C(K)$ for $K$ a scattered compact space, admits an uncountable irredundant set. 
\end{theorem}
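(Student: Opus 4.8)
The plan is to pass from the C*-algebra to the Boolean algebra of clopen sets and then to build an uncountable irredundant family there by a ccc forcing argument powered by Martin's Axiom.

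First I would reduce the problem to a purely Boolean-algebraic one. Since $K$ is scattered compact it is zero-dimensional, so $C(K)$ is the norm-closure of the linear span of the characteristic functions $\chi_U$, $U \in \text{Clop}(K)$, and these are exactly the projections of $C(K)$. The C*-subalgebra generated by a family $\{\chi_{U_\beta} : \beta \in B\}$ is $C(L)$, where $L$ is the Stone space of the Boolean subalgebra $D = \langle U_\beta : \beta \in B\rangle$ realized as a quotient of $K$; a projection $\chi_V$ lies in $C(L)$ if and only if $V \in D$. Hence $\chi_{U_\alpha}$ belongs to the C*-subalgebra generated by $\{\chi_{U_\beta} : \beta \neq \alpha\}$ exactly when $U_\alpha$ lies in the Boolean subalgebra generated by $\{U_\beta : \beta \neq \alpha\}$, so any Boolean-irredundant family $\{U_\alpha\}$ yields a C*-irredundant family $\{\chi_{U_\alpha}\}$. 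As $C(K)$ is nonseparable, $K$ is an uncountable scattered compact space, whence $\text{Clop}(K)$ is an uncountable superatomic Boolean algebra, and it suffices to produce an uncountable irredundant subset of $\text{Clop}(K)$.

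Next I would record a local, witness-based reformulation of irredundance that is amenable to finite approximation. A family $\{U_\alpha\}$ is irredundant precisely when, for each $\alpha$, there is a pair of points $x_\alpha, y_\alpha \in K$ which $U_\alpha$ separates (say $x_\alpha \in U_\alpha \not\ni y_\alpha$) but which no $U_\beta$ with $\beta \neq \alpha$ separates (that is, $x_\alpha \in U_\beta \Leftrightarrow y_\alpha \in U_\beta$): the pair witnesses that $U_\alpha$ is not saturated for the equivalence relation induced by the other clopen sets, so $U_\alpha$ escapes the subalgebra they generate. I would call such a triple $(U,x,y)$ a \emph{gadget} and observe that a set of gadgets gives an irredundant family as soon as, for any two of them $(U,x,y)$ and $(U',x',y')$, neither clopen set separates the other's pair. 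Using the Cantor--Bendixson decomposition of the uncountable scattered space $K$ --- whose levels either include an uncountable one or run up to an uncountable height --- I would then extract an uncountable reservoir of candidate gadgets, choosing the clopen sets to isolate points of prescribed Cantor--Bendixson rank so that their separation behaviour is controlled by finite \emph{tree-like} data. Let $P$ be the poset whose conditions are finite sets of gadgets from this reservoir in which no clopen set separates another gadget's pair, ordered by reverse inclusion. Because a filter is directed, any two gadgets in a generic filter lie in a common condition, and the defining compatibility then forces the entire union to be irredundant; meeting $\aleph_1$ suitably chosen dense sets that keep introducing gadgets with ranks cofinal in the reservoir produces an uncountable such family.

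The main obstacle is the countable chain condition for $P$, and this is exactly where Martin's Axiom and the scattered structure must be used together. Given $\aleph_1$ conditions I would apply a $\Delta$-system lemma to their finite sets of witness points and to the finite Cantor--Bendixson data describing their clopen sets, and then a pressing-down argument to align the roots; the real content is to show that two conditions sharing such a root are compatible, i.e. that outside the root no clopen set of one separates a witness pair of the other. This is precisely the point at which the finite-to-one, tree-like behaviour of clopen sets in a scattered space is needed to rule out the unwanted separations, and it also explains why the statement is obtained under an additional axiom rather than in ZFC. Once ccc is established, Martin's Axiom supplies a filter meeting the $\aleph_1$ dense sets above, and its union is the desired uncountable irredundant family of projections in $C(K)$.
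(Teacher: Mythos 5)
Note first that the paper does not prove this statement at all: it is quoted as a theorem of Todorcevic, so the only thing to compare your attempt with is the analogous machinery the paper develops in Section~\ref{pfasection} (Definition~\ref{def_part}, Lemma~\ref{non_ccc}, Theorem~\ref{mateorema}) and the method of the cited papers. Your reduction steps are correct and standard: a scattered compactum is zero-dimensional, a clopen set saturated with respect to the equivalence relation induced by a family of clopen sets is a finite Boolean combination of them (by compactness), and hence an uncountable family of ``gadgets'' $(U_\alpha,x_\alpha,y_\alpha)$ such that no $U_\beta$ with $\beta\neq\alpha$ separates $\{x_\alpha,y_\alpha\}$ yields an uncountable irredundant set of projections in $C(K)$. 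The genuine gap is that the two claims carrying all of the mathematical content are asserted rather than proven: (i) that the poset $P$ of finite mutually non-separating sets of gadgets is ccc, and (ii) that the sets forcing conditions to pick up gadgets of cofinally large rank are dense. For (i) you reduce, via a $\Delta$-system and pressing-down argument, to showing that two conditions with a common root are compatible, and then appeal to ``finite-to-one, tree-like behaviour of clopen sets''; no argument is given, and none is available at this level of generality --- nothing prevents the clopen set of a gadget outside one root from separating the witness pair of a gadget outside the other, and indeed if compatibility of root-aligned conditions were automatic, ccc would be trivial for \emph{any} reservoir, which cannot be the case. For (ii), your conditions are symmetric (the new gadget's clopen set must not separate old pairs \emph{and} old clopen sets must not separate the new pair), so extendability of a condition needs both directions, while a right-separated reservoir extracted from the Cantor--Bendixson decomposition gives you only one.

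This is also exactly where your plan diverges from how such theorems are actually proved, both by Todorcevic and in this paper's own Martin's Axiom argument. There one does \emph{not} prove the poset is ccc. The order is defined asymmetrically --- in Definition~\ref{def_part}, only the newly added indices $\alpha\in p\setminus q$ are required to satisfy $\tau_\alpha(a_\beta)<\frac{1}{2}$ against the old $\beta\in q$ --- so that density of the relevant sets is immediate from the right-separated (semibiorthogonal) structure; and then one argues by dichotomy: if the poset fails ccc, an uncountable antichain, refined by a $\Delta$-system, already yields a free sequence and hence an uncountable discrete set (Lemma~\ref{non_ccc}), which in the commutative case is converted into an uncountable irredundant family; if the poset is ccc, Martin's Axiom plus a Fodor-type argument produces the family from a generic filter (Theorem~\ref{mateorema}). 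Your proposal commits precisely to the branch the known proofs avoid --- an outright ZFC proof of ccc for a symmetric poset --- and supplies no proof of it. As written, the attempt is a correct (and useful) reduction to the Boolean/witness-pair formulation, followed by a restatement of the problem rather than a solution of it.
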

 
The main idea of this article was to generalize this result for every scattered C*-algebra. We were able to prove a weaker version, replacing the notion of irredundant sets by a weaker version:

\index{almost irredundant sets}
\begin{definition}\label{almost}Let $\A$ be a C*-algebra. A subset $\mathcal{X}$ of a C*-algebra $\A$ is called almost irredundant if and only if for every $a\in \mathcal{X}$, the element $a$ does not belong to the norm-closure of 
$$\{\sum_{i=1}^n \lambda_i \prod_{j=1}^{n_i}a_{{i,j}}: \textrm{ where } a_{{i,j}} \in \mathcal{X}\setminus\{a\} \textrm{ and} \sum |\lambda_i|\leq 1\}.$$
\end{definition}

Clearly, every irredundant set is almost irredundant, but this two notions are not equivalent. For instance, consider $\mathcal{A}=M_2(\mathbb{C})$. Let $p_1,p_2$ be projections onto $(1,0)$ and onto $(0,1)$ respectively, and $I_d$ the identity of $\mathcal{A}$. Then $\{p_1, p_2, Id\}$ is almost irredundant but it is not irredundant.

The notion of almost irredundant set can be thought as a stronger version of the notion of $\kappa$ - polyhedron: A bounded family $(x_\alpha)_{\alpha<\kappa}$ in a Banach space $X$ is a $\kappa$- polyhedron if $x_\alpha \notin \overline{cov}(x_\beta, \beta<\kappa, \beta \neq \alpha)$ for every $\alpha<\kappa$.
Recall that a Banach space X has the Kunen - Shelah property $KS_4$ if $X$ does not have an $\omega_1$- polyhedron \footnote{See \cite{granero2003kunen} for details.}. 

In particular, if a C*-algebra $\A$ has an uncountable almost irredundant set, then $\A$ has an $\omega_1$- polyhedron and therefore, does not have the Kunen-Shelah property $KS_4$.

By Proposition 2.2 of \cite{granero2003kunen}, the existence of an $\omega_1$- polyhedron is equivalent to the existence of an uncountable bounded almost biorthogonal system (UBABS) of type $\eta$ for some $0\leq \eta<1$:
\begin{definition} $ $
\begin{itemize}
\item An uncountable bounded almost biorthogonal system (UBABS) of type $\eta$ in a Banach space $X$ is a bounded uncountable sequence $(x_\alpha, x_\alpha^*)_{\alpha<\omega_1} \in X\times X^*$ such that $x_\alpha^*(x_\alpha) = 1$ and $|x_\alpha^*(x_\beta)|\leq\eta$ for all $\alpha\neq \beta$ (see Section 2 of \cite{granero2003kunen}). 

\item In the case where $\A$ is a C*-algebra and $(a_\alpha, \tau_\alpha)_{\alpha<\omega_1}$ is a UBABS of type $\eta$ such that $\tau_\alpha$ is a positive linear functional of norm one (i.e., a state) and $0<a_\alpha\leq 1$ is a positive element for every $\alpha<\omega_1$, we say that $(a_\alpha, \tau_\alpha)_{\alpha<\omega_1}$ is an uncountable positive bounded almost biorthogonal system (UBABS+). %If in addiction $\tau_\alpha$ is a pure state for every $\alpha<\omega_1$, we denote it by UBABSP.
\end{itemize}
\end{definition}

For almost irredundant sets, we prove the following:

\begin{theorem}\label{discretalmost_intro}
Let $\A$ be an unital C*-algebra. Suppose $(a_\alpha,\tau_\alpha)_{\alpha<\omega_1}$ is a UBABS+ of type $\eta$ for $0\leq \eta<1$.
Then $\mathcal{X} = \{a_\alpha: \alpha<\omega_1\}$ is an uncountable almost irredundant set.
\end{theorem}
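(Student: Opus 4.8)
The plan is to use each state $\tau_\gamma$ as a linear functional that detects $a_\gamma$ while remaining uniformly small on every admissible combination of the remaining generators. Fix $\gamma<\omega_1$, write $a=a_\gamma$, and let
$$S_\gamma=\Big\{\,\sum_{i=1}^{n}\lambda_i\prod_{j=1}^{n_i}a_{i,j}\;:\;a_{i,j}\in\mathcal{X}\setminus\{a\},\ \sum_i|\lambda_i|\le 1\,\Big\}$$
be the set whose norm-closure appears in Definition~\ref{almost} (each product being nonempty, $n_i\ge 1$). Almost irredundancy at $a$ is exactly the assertion $a\notin\overline{S_\gamma}$, and since $\tau_\gamma$ is norm-continuous it suffices to find a constant $c>0$ with $|\tau_\gamma(x)|\le 1-c$ for all $x\in S_\gamma$: then for every $y\in\overline{S_\gamma}$ we still get $|\tau_\gamma(y)|\le 1-c$, whence $\|a-y\|\ge|\tau_\gamma(a-y)|\ge \tau_\gamma(a)-|\tau_\gamma(y)|\ge c>0$, using $\tau_\gamma(a)=1$ and $\|\tau_\gamma\|=1$. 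Thus everything reduces to bounding $\tau_\gamma$ away from $1$ on $S_\gamma$.

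The core of the argument is a single estimate: for every nonempty product $p=a_{i,1}a_{i,2}\cdots a_{i,n_i}$ of elements of $\mathcal{X}\setminus\{a\}$ I claim $|\tau_\gamma(p)|\le\sqrt{\eta}$. Writing the outermost factor as $a_{i,1}=a_\beta$ with $\beta\neq\gamma$ and setting $q=a_{i,2}\cdots a_{i,n_i}$, I factor $a_\beta=a_\beta^{1/2}a_\beta^{1/2}$ and apply the Cauchy--Schwarz inequality for the state $\tau_\gamma$, namely $|\tau_\gamma(u^*v)|^2\le\tau_\gamma(u^*u)\,\tau_\gamma(v^*v)$, with $u=a_\beta^{1/2}$ and $v=a_\beta^{1/2}q$. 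This yields
$$|\tau_\gamma(p)|^2\le\tau_\gamma(a_\beta)\cdot\tau_\gamma(q^*a_\beta q).$$
The first factor is at most $\eta$ directly from the UBABS+ hypothesis, since $\beta\neq\gamma$. For the second, the assumption $0<a_\beta\le 1$ gives both $\|q\|\le 1$ (a product of contractions) and the operator inequality $q^*a_\beta q\le q^*q$, so positivity of $\tau_\gamma$ forces $\tau_\gamma(q^*a_\beta q)\le\tau_\gamma(q^*q)\le\|q\|^2\le 1$. Multiplying the two bounds gives the claim.

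Assembling is then routine. For $x=\sum_i\lambda_i p_i\in S_\gamma$ the triangle inequality together with $\sum_i|\lambda_i|\le 1$ gives $|\tau_\gamma(x)|\le\sqrt{\eta}\sum_i|\lambda_i|\le\sqrt{\eta}$, and since $\eta<1$ the constant $c=1-\sqrt{\eta}>0$ works in the reduction above. As $\gamma<\omega_1$ was arbitrary, $\mathcal{X}$ is almost irredundant. It remains to note that $\mathcal{X}$ is genuinely uncountable: if $a_\alpha=a_\beta$ for some $\alpha\neq\beta$ then $\tau_\alpha(a_\beta)=\tau_\alpha(a_\alpha)=1$, contradicting $|\tau_\alpha(a_\beta)|\le\eta<1$; hence $\alpha\mapsto a_\alpha$ is injective and $|\mathcal{X}|=\omega_1$.

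I expect the only delicate point to be the product estimate, and in particular the worry that a long product might accumulate mass and push $\tau_\gamma(p)$ back toward $1$. The decisive observation is that only the outermost factor needs to meet $\tau_\gamma$: Cauchy--Schwarz peels off exactly that factor to produce the $\sqrt{\eta}$, while the entire tail $q$ is absorbed harmlessly into the contraction bound $\tau_\gamma(q^*q)\le 1$. Beyond stating the elementary functional-calculus and positivity facts ($a_\beta=a_\beta^{1/2}a_\beta^{1/2}$ and $q^*a_\beta q\le q^*q$) cleanly, I anticipate no real difficulty; unitality of $\A$ enters only to make the normalization $0<a_\alpha\le 1$ meaningful relative to the order unit.
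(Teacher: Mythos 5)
Your proof is correct and takes essentially the same route as the paper: your single Cauchy--Schwarz application peeling off the outermost factor, $|\tau_\gamma(p)|^2\le\tau_\gamma(a_\beta)\,\tau_\gamma(q^*a_\beta q)\le\eta\cdot 1$, is precisely the content of the paper's Lemma \ref{inequalityalmost}, and the triangle-inequality assembly with $\sqrt{\eta}<1$ matches the paper's argument (phrased there as a contradiction rather than a direct distance bound). Your closing remark that $\alpha\mapsto a_\alpha$ is injective, so $\mathcal{X}$ really has cardinality $\omega_1$, is a small point the paper leaves implicit.
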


For commutative C*-algebras, the existence of an uncountable discrete set of pure states guarantee the existence of an UBABS+ of type $\eta=0$. This follows from the Urysohn's Lemma. In fact, suppose $K$ is a compact Hausdorff space and $(x_\alpha)_{\alpha<\omega_1}$ is a discrete set in $K$. For each $\alpha<\omega_1$, consider $f_\alpha:K\to [0,1]$ in $C(K)$ such that $f_\alpha(x_\alpha)=1$ and $f_\alpha(x_\beta)=0$ for every $\beta\neq \alpha$. Then $(f_\alpha,\delta_{x_\alpha})_{\alpha<\omega_1}$ is a UBABS+ of type $\eta=0$.

In this direction, we obtain the following:

\begin{proposition}\label{discrete-positive-intro}Let $\A$ be an unital C*-algebra. Suppose $(\tau_\alpha)_{\alpha<\omega_1}$ is an uncountable discrete set in $S(\A)$ such that $\tau_\alpha$ can be excised\footnote{See Definition \ref{excision}.} for every $\alpha<\omega_1$. Then there are an uncountable subfamily $(\beta_\alpha)_{\alpha<\omega_1}$  of $\omega_1$, an uncountable family  $(a_\alpha)_{\alpha<\omega_1}$ of $\A_{+,1}$ and $0\leq \eta<1 $ such that $(a_\alpha,\tau_{\beta_\alpha})_{\alpha<\omega_1}$ is a UBABS+ of type  $\eta$.
In particular, by Theorem \ref{discretalmost_intro}, $\mathcal{X} = \{a_\alpha: \alpha<\omega_1\}$ is an uncountable almost irredundant set. In the case where $\A$ is of real rank zero\footnote{We say that a C*-algebra $\A$ has real rank zero if the set of all self-adjoint elements with finite spectrum is dense in $\A_{sa}$.}, we can assume that $(a_\alpha)_{\alpha<\omega_1}$ is a family of projections.
\end{proposition}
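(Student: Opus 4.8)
The plan is to read off from the discreteness of $(\tau_\alpha)_{\alpha<\omega_1}$ a \emph{uniform} system of weak$^*$ separating data, to manufacture from the excision hypothesis positive contractions that ``concentrate'' each state, and to show by a Cauchy--Schwarz estimate that two states which are weak$^*$-separated cannot both be nearly supported on the same concentrating element. First I would use discreteness: for each $\alpha$ the state $\tau_\alpha$ is isolated in $\{\tau_\beta:\beta<\omega_1\}$, so there are a finite set $F_\alpha\subseteq\A$ of self-adjoint contractions (pass to real and imaginary parts and rescale) and a rational $\epsilon_\alpha>0$ with
\[
\tau_\beta\notin U_\alpha:=\{\sigma\in S(\A):\ |\sigma(x)-\tau_\alpha(x)|<\epsilon_\alpha\ \text{for all }x\in F_\alpha\}\qquad(\beta\neq\alpha).
\]
Since $\omega_1$ is uncountable and the $\epsilon_\alpha$ are rational, a pigeonhole argument (entirely in ZFC) yields an uncountable $S\subseteq\omega_1$ and a single $\epsilon>0$ with $\epsilon_\alpha\ge\epsilon$ for all $\alpha\in S$; enumerating $S$ as $(\beta_\alpha)_{\alpha<\omega_1}$ fixes the required subfamily. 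For $\alpha\in S$, the excision of $\tau_{\beta_\alpha}$ (Definition \ref{excision}) provides a positive contraction $e_\alpha\in\A$ with $\tau_{\beta_\alpha}(e_\alpha)=1$ and $\|e_\alpha x e_\alpha-\tau_{\beta_\alpha}(x)e_\alpha^2\|<\delta$ for every $x\in F_{\beta_\alpha}$, where $\delta:=\epsilon/4$ is fixed once and for all; I then set $a_\alpha:=e_\alpha^2\in\A_{+,1}$.

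The technical core is the following estimate, which quantifies that ``a state far from $\tau$ cannot be concentrated on an element excising $\tau$''. Let $\sigma,\tau\in S(\A)$, let $e$ be a positive contraction, and let $x$ be a self-adjoint contraction with $\|exe-\tau(x)e^2\|<\delta$. Writing $\rho:=1-\sigma(e^2)$ and using $e^2\le e\le 1$ one gets $\sigma((1-e)^2)\le\rho$; decomposing $x-exe=(1-e)x+ex(1-e)$ and applying the Cauchy--Schwarz inequality for the state $\sigma$ to each summand gives $|\sigma(x)-\sigma(exe)|\le 2\rho^{1/2}$, and combining this with $|\sigma(exe)-\tau(x)\sigma(e^2)|<\delta$ yields
\[
|\sigma(x)-\tau(x)|\ \le\ 2\rho^{1/2}+\delta+\rho .
\]
Consequently, if $|\sigma(x)-\tau(x)|\ge\epsilon$ and $\delta\le\epsilon/4$, then $\rho\ge \epsilon^2/36$, that is, $\sigma(e^2)\le 1-\epsilon^2/36$.

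With this in hand the almost biorthogonality is immediate. On the diagonal, $\tau_{\beta_\alpha}(e_\alpha)=1$ together with $1-e_\alpha\ge 0$ forces $\tau_{\beta_\alpha}((1-e_\alpha)^2)=0$, hence $\tau_{\beta_\alpha}(a_\alpha)=\tau_{\beta_\alpha}(e_\alpha^2)=1$. Off the diagonal, fix $\alpha\neq\gamma$ in $S$; since $\tau_{\beta_\gamma}\notin U_{\beta_\alpha}$ there is a coordinate $x\in F_{\beta_\alpha}$ with $|\tau_{\beta_\gamma}(x)-\tau_{\beta_\alpha}(x)|\ge\epsilon$, and $e_\alpha$ was chosen to excise $\tau_{\beta_\alpha}$ on \emph{all} of $F_{\beta_\alpha}$ (this is exactly why excision on a finite set, rather than a single element, is required). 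Applying the estimate with $\sigma=\tau_{\beta_\gamma}$, $\tau=\tau_{\beta_\alpha}$, $e=e_\alpha$ gives $\tau_{\beta_\gamma}(a_\alpha)=\tau_{\beta_\gamma}(e_\alpha^2)\le 1-\epsilon^2/36=:\eta<1$. Thus $(a_\alpha,\tau_{\beta_\alpha})_{\alpha<\omega_1}$ is a UBABS+ of type $\eta$, and Theorem \ref{discretalmost_intro} shows that $\mathcal{X}=\{a_\alpha:\alpha<\omega_1\}$ is an uncountable almost irredundant set.

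For the real rank zero addendum I would, instead of positive contractions, excise each $\tau_{\beta_\alpha}$ by a \emph{projection}: in the real rank zero case one may take $p_\alpha\in\A$ with $p_\alpha^2=p_\alpha$, $\tau_{\beta_\alpha}(p_\alpha)=1$, and $\|p_\alpha x p_\alpha-\tau_{\beta_\alpha}(x)p_\alpha\|<\delta$ on $F_{\beta_\alpha}$, and then the same estimate with $e=p_\alpha$ (so $e^2=e=p_\alpha$) yields $\tau_{\beta_\gamma}(p_\alpha)\le\eta$ directly, giving the desired family of projections. The main obstacle throughout is the passage from the purely topological separation of the states — the $\epsilon$ in the weak$^*$ neighbourhoods — to an operator-level statement about the $a_\alpha$; this is precisely what the excision hypothesis together with the Cauchy--Schwarz estimate are designed to bridge, and it is also where the uniformization to a single $\epsilon$ on an uncountable subfamily is indispensable, since only a uniform separation produces a single type $\eta<1$.
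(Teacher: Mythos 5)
Your overall skeleton --- discreteness gives separating weak$^*$ data, excision gives concentrating positive contractions, a pigeonhole argument gives a uniform $\eta$ --- is the same as the paper's, and your Cauchy--Schwarz estimate is correct as far as it goes. But there is a genuine gap at the step where you claim that ``the excision of $\tau_{\beta_\alpha}$ (Definition \ref{excision}) provides a positive contraction $e_\alpha$ with $\tau_{\beta_\alpha}(e_\alpha)=1$'' together with the inequality $\|e_\alpha x e_\alpha-\tau_{\beta_\alpha}(x)e_\alpha^2\|<\delta$ on $F_{\beta_\alpha}$. The definition of excision (in the Akemann--Anderson--Pedersen form with $e^2$, which is the form you use) only asserts the existence of a net $(a_\lambda)$ of positive norm-one elements with $\|a_\lambda b a_\lambda-\tau(b)a_\lambda^2\|\to 0$; it says nothing about the values $\tau(a_\lambda)$, and these need not tend to $1$ --- the excising net can miss the state entirely. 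Concretely, take $\A=C([0,1])$ and $\tau=\delta_0$: let $a_n$ be a continuous bump with $a_n(0)=0$, $a_n\equiv 1$ on $[1/(2n),1/n]$ and $a_n\equiv 0$ off $[1/(4n),2/n]$. Then $\|a_n f a_n - f(0)a_n^2\|\leq \sup_{0\leq t\leq 2/n}|f(t)-f(0)|\to 0$ for every $f$, so $(a_n)$ excises $\delta_0$, while $\delta_0(a_n)=0$ for every $n$. Hence the normalization $\tau_{\beta_\alpha}(e_\alpha)=1$ --- which is exactly what you need for the diagonal condition $\tau_{\beta_\alpha}(a_\alpha)=1$ of a UBABS+ --- cannot be read off from the excision hypothesis alone, and no rescaling trick can recover it, since in the example $\tau(a_n)$ does not even tend to $1$.

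What fills this hole in the paper is Proposition \ref{openneighbourhood} (Farah, Lemma 5.2.2; Eilers--Olesen, Proposition 3.13.16): for an excisable state $\tau$, the sets $U_{a,\varepsilon}=\{\sigma:\sigma(a)>1-\varepsilon\}$ with $a\in\A_{+,1}$ and $\tau(a)=1$ form a weak$^*$ neighbourhood basis at $\tau$, with $a$ a projection when $\A$ has real rank zero --- note that your real-rank-zero addendum asserts the projection version of the same unproved claim, so it has the identical gap. Producing such \emph{normalized} concentrating elements is the nontrivial content of that proposition (for pure states one passes to the GNS representation and uses Kadison transitivity to arrange $\pi_\tau(a)\xi_\tau=\xi_\tau$); your Cauchy--Schwarz computation is, in effect, a correct proof of the \emph{containment} half of it (that $\{\sigma:\sigma(e^2)>1-\rho\}$ lies inside a prescribed neighbourhood when $e$ approximately excises $\tau$ on $F$ and $\tau(e)=1$), but not of the \emph{existence} half. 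Once you replace the appeal to Definition \ref{excision} by an appeal to Proposition \ref{openneighbourhood}, your argument goes through --- and in fact it then collapses to the paper's own proof, since the neighbourhood-basis statement already contains everything your estimate was designed to establish.
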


In particular, if $\A$ is a C*-algebra such that there is an uncountable discrete set of pure states, then there is an UBABS+ of type $\eta$ for some $0\leq \eta <1$. This follows from Proposition \ref{discrete-positive-intro} and by the fact that every pure state can be excised (Proposition 2.2 of \cite{akemann1986excising}).

In the search of discrete sets of pure states, we obtained the following result:

\begin{theorem}
Let $\mathcal{A}$ be an unital C*-algebra and suppose that $\A$ has uncountable positive semibiorthogonal system\footnote{See Definition \ref{semibiorthogonal}.} of pure states. Then
\begin{enumerate}
\item Assuming Martin's Axiom, the state space $S(\A)$ has an uncountable discrete set.
\item If $\A$ is a commutative C*-algebra, then assuming Martin's Axiom, $\A$ has an UBABS+ (and therefore, an uncountable almost irredundant set).
\item Assuming the Proper Forcing Axiom, the space of pure states  $P(\A)$, has an uncountable discrete set. 
\end{enumerate}
\end{theorem}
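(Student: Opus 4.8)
The plan is to reduce all three parts to a single combinatorial statement about the matrix $r(\alpha,\beta)=\tau_\alpha(a_\beta)$ attached to the given positive semibiorthogonal system $(a_\alpha,\tau_\alpha)_{\alpha<\omega_1}$. By semibiorthogonality we have $r(\alpha,\alpha)=1$ and $r(\alpha,\beta)=0$ whenever $\beta<\alpha$, so the only uncontrolled entries are the ``upper triangular'' values $r(\alpha,\beta)$ with $\alpha<\beta$, all lying in $[0,1]$. I first record the \emph{rigid reduction}: if one can find an uncountable $I\subseteq\omega_1$ and $\eta<1$ with $r(\alpha,\beta)\le\eta$ for all $\alpha<\beta$ in $I$, then setting $U_\alpha=\{\sigma\in S(\A):\sigma(a_\alpha)>\tfrac{1+\eta}{2}\}$ gives $\tau_\alpha\in U_\alpha$, while for $\beta\in I\setminus\{\alpha\}$ one has $r(\beta,\alpha)=0$ (if $\beta>\alpha$) or $r(\beta,\alpha)\le\eta$ (if $\beta<\alpha$), so $\tau_\beta\notin U_\alpha$. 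Thus the same $I$ makes $\{\tau_\alpha:\alpha\in I\}$ relatively discrete in $S(\A)$ \emph{and} makes $(a_\alpha,\tau_\alpha)_{\alpha\in I}$ a UBABS$+$ of type $\eta$. The whole difficulty is therefore concentrated in producing such an $I$, and the point of splitting into three statements is that the poset witnessing this is harder to control as one passes from the commutative case, to general discreteness in the compact space $S(\A)$, to discreteness inside the noncompact space $P(\A)$.

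For part~(2) I would run this rigid reduction directly. Writing $\A=C(K)$, $\tau_\alpha=\delta_{x_\alpha}$ and $a_\alpha=f_\alpha$, normality of $K$ lets me assume via Urysohn's lemma that $0\le f_\alpha\le1$ and, after shrinking, that the $x_\alpha$ are separated by the $f_\alpha$; the target becomes an uncountable $I$ with $f_\beta(x_\alpha)\le\eta$ for $\alpha<\beta$ in $I$. I would take $\PP$ to be the poset of finite ``good'' sets $F$ (those with $f_\beta(x_\alpha)\le\eta$ for all $\alpha<\beta$ in $F$), ordered by reverse inclusion, and prove it is $\sigma$-centered by partitioning conditions according to the rational approximations of the finitely many function values they involve; Urysohn then guarantees that conditions with matching rational data are compatible, and in fact one can push $\eta$ to $0$. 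Martin's Axiom applied to the dense sets $D_\gamma=\{F:\max F\ge\gamma\}$ yields the uncountable $I$, hence a UBABS$+$ and, by Theorem~\ref{discretalmost_intro}, an uncountable almost irredundant set.

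For part~(1) the rigid poset need not be ccc noncommutatively, so instead I would use the extra freedom that \emph{discreteness} (unlike UBABS$+$) allows an arbitrary basic neighborhood at each point. I would let conditions be finite sets $F\subseteq\omega_1$ together with, for each $\alpha\in F$, a basic weak$^{*}$ neighborhood $N_\alpha$ of $\tau_\alpha$ (coded by finitely many indices and a rational radius) such that $\tau_\beta\notin N_\alpha$ for distinct $\alpha,\beta\in F$, ordered by coordinatewise extension. Compactness of $S(\A)$ and a $\Delta$-system argument reduce ccc to a compatibility (amalgamation) step for two conditions sharing a root; to execute it I would invoke excision of the pure states $\tau_\alpha$ (Definition~\ref{excision} and Proposition~2.2 of \cite{akemann1986excising}) to replace each $a_\beta$ by a localized positive element $e_\beta$ with $\tau_\beta(e_\beta)\approx1$ and $\tau_\alpha(e_\beta)\approx0$ on the finitely many relevant $\alpha$, which forces the two neighborhood systems to fit together. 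Martin's Axiom then produces an uncountable discrete subset of $S(\A)$. For part~(3), where the ambient space is the noncompact $P(\A)$, the analogous poset is no longer ccc but is \emph{proper}: I would verify properness by the standard elementary-submodel method, taking a countable $M\prec H_\theta$ containing the system and, for $p\in M$, building an $(M,\PP)$-generic extension whose amalgamation step again runs on excision of the relevant $\tau_\alpha$; PFA applied to the $\omega_1$-many growth-dense sets yields an uncountable discrete set of pure states, feeding Proposition~\ref{discrete-positive-intro}.

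The main obstacle in every part is the chain-condition verification, and within it the \emph{amalgamation step}: two finite approximations must be merged without creating a ``bad'' pair, i.e. an upper-triangular value too close to $1$ or, in part~(3), two excision elements that fail to separate their pure states. This is exactly where purity is essential, since excision is the tool that localizes each state and kills the cross-values. I expect the commutative case to be routine because Urysohn's lemma gives exact, not approximate, localization (so MA suffices and $\eta$ may be taken $0$); the genuine work is noncommutative, where excision yields only approximate orthogonality. This weakening is precisely why the general statement halts at discreteness in $S(\A)$ under MA in part~(1), and why reaching the finer conclusion inside $P(\A)$ in part~(3) requires the full strength of PFA rather than a mere ccc argument.
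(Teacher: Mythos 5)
Your proposal has a genuine structural gap that runs through all three parts: you try to \emph{prove outright} that a suitably designed poset satisfies a chain condition ($\sigma$-centered in part (2), ccc in part (1), proper in part (3)) and then apply MA/PFA. The paper never does this, and it cannot be done, because these chain conditions fail consistently for the posets in question. The paper's actual argument (Lemma \ref{non_ccc} together with Theorem \ref{mateorema}) is a \emph{dichotomy} on the poset $\mathbb{P}$ of Definition \ref{def_part}: if $\mathbb{P}$ happens to be ccc, MA-genericity plus a $\Delta$-system/Fodor argument extracts an uncountable set on which all cross-values $\tau_\alpha(a_\beta)$ are $<\tfrac12$ (a UBABS+, hence discreteness); if $\mathbb{P}$ is \emph{not} ccc, the uncountable antichain itself, refined to a $\Delta$-system, codes a free sequence --- of the averaged states $\tfrac1n\sum_{\alpha\in p_\eta\setminus\Delta}\tau_\alpha$ in $S(\A)$ for part (1), and of tuples in $K^n$ in the commutative case, where Proposition 3.5 of \cite{somepiotr11} converts an uncountable discrete subset of $K^n$ into a UBABS+. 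Neither horn establishes a chain condition, and the reason part (1) only reaches $S(\A)$ (not a UBABS+, not $P(\A)$) is precisely that in the non-ccc case the discrete witnesses are averages, which are not pure; your scheme, if it worked, would give a UBABS+ for arbitrary noncommutative $\A$ under MA, which the paper explicitly identifies as open (blocked by the lack of a noncommutative analogue of Urysohn's lemma / of Proposition 3.5 of \cite{somepiotr11}).

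Concretely: in part (2), your $\sigma$-centeredness claim is false. Urysohn's lemma separates disjoint \emph{closed} sets, but $\{x_\beta:\beta\neq\alpha\}$ need not be closed, and when $K$ is hereditarily separable nothing can separate $x_\alpha$ from the remaining points --- this is the crux of the entire problem, not a technicality. Worse, since singletons are conditions in your poset and a centered family of finite ``good'' sets has good finite unions, $\sigma$-centeredness would already yield, in ZFC without any MA, an uncountable $I$ with all cross-values $\le\eta$, i.e.\ a UBABS+; this contradicts the consistency of Kunen's space, which under CH is compact, scattered of height $\omega_1$ (so $C(K)$ carries an uncountable positive semibiorthogonal system of pure states by Lemma \ref{existence_family_positive}) yet hereditarily separable with $C(K)$ enjoying the Kunen--Shelah properties excluding uncountable polyhedra. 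In part (1), your neighborhood poset is internally inconsistent: if extensions may shrink neighborhoods, any two conditions are compatible (shrink each $N_\alpha$ to avoid the finitely many new states), so ccc is trivial but the generic filter yields no discrete set; if the neighborhoods are frozen once chosen, then ccc is exactly the kind of statement that fails consistently, and excision cannot rescue amalgamation because compatibility is a property of the already-fixed data, not of replacements you construct afterwards. In part (3), the paper's proof is short and soft: semibiorthogonality makes $(\tau_\alpha)_{\alpha<\omega_1}$ right-separated in $P(\A)$ (take $U_\alpha=\{\sigma:\sigma(a_\alpha)>\tfrac12\}$), and PFA, via Todorcevic's theorem that there are no S-spaces, gives an uncountable left-separated subsequence; a sequence that is both right- and left-separated is discrete (Theorem \ref{pfateorema}). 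Your plan to verify properness of an unspecified poset by elementary submodels defers all of the content to an amalgamation step inheriting the flaw above; in effect you are proposing to reprove a special case of the S-space theorem by hand, which is not a proof.
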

\begin{proof}
See Theorem \ref{mateorema} and Theorem \ref{pfateorema}.
\end{proof}

In the last section, we observe that the existence of an uncountable sequence of increasing ideals is sufficient to guarantee the existence of an uncountable positive semibiorthogonal system of pure states. In particular, every C*-algebra with a composition series of length at least $\omega_1$. We conclude the last section with the proof of our main result:

\begin{theorem}\label{main_result_intro}Assuming PFA, every nonseparable scattered C*-algebra admits an uncountable almost irredundant set of projections.
\end{theorem}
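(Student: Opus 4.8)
The plan is to deduce the theorem from the machinery already assembled, so that the only genuinely new work is one structural fact about scattered C*-algebras. First I would reduce to the unital case by passing to the unitization $\widetilde{\A}$: it is again scattered and it is nonseparable precisely when $\A$ is, and since whether an element lies in the norm-closure of the (unit-free) linear combinations of products over $\mathcal{X}\setminus\{a\}$ depends only on the subalgebra generated by $\mathcal{X}\setminus\{a\}$, almost irredundance of a subset of $\A$ is computed identically in $\A$ and in $\widetilde{\A}$; so it suffices to produce the set inside $\widetilde{\A}$ with all members projections of $\A$. The heart of the matter is the claim that a nonseparable scattered C*-algebra carries an uncountable positive semibiorthogonal system of pure states, which I establish in the final paragraph (its ideal-theoretic case being exactly the last-section observation). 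Granting this, I invoke Theorem \ref{pfateorema}: under PFA the space $P(\A)$ of pure states then has an uncountable discrete set $(\tau_\alpha)_{\alpha<\omega_1}$, and since $P(\A)$ carries the subspace weak* topology of $S(\A)$, any subset of $P(\A)$ discrete in $P(\A)$ is discrete in $S(\A)$.

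Next I would feed $(\tau_\alpha)_{\alpha<\omega_1}$ into Proposition \ref{discrete-positive-intro}. Its excision hypothesis is automatic because every pure state can be excised (Proposition 2.2 of \cite{akemann1986excising}), so the proposition yields an uncountable subfamily and positive contractions $(a_\alpha)_{\alpha<\omega_1}$ forming a UBABS+ of type $\eta$ for some $0\le\eta<1$. Using the known fact that scattered C*-algebras (in particular $\widetilde{\A}$) have real rank zero, the same proposition lets me take the $a_\alpha$ to be projections; moreover the construction is carried out inside the ideal $\A$ (the pure states are extensions of pure states of $\A$, and both excision and real rank zero for them can be realized inside $\A$), so these projections lie in $\A$. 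Finally Theorem \ref{discretalmost_intro} converts the UBABS+ into the desired uncountable almost irredundant set $\{a_\alpha:\alpha<\omega_1\}$, now consisting of projections of $\A$. Up to the structural input, this assembly is a routine concatenation of the earlier results.

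The main obstacle is therefore the structural step, which I would organize around the composition series $0=I_0\subseteq I_1\subseteq\cdots\subseteq I_\eta=\A$ of the scattered C*-algebra, whose successive quotients $I_{\gamma+1}/I_\gamma$ are $c_0$-direct sums $\bigoplus_{j}\mathcal{K}(H_{\gamma,j})$ of elementary algebras. The density character of $\A$ is governed by three quantities: the length $\eta$, the number of summands in each quotient, and the dimensions of the blocks $H_{\gamma,j}$; hence nonseparability forces one of them to be uncountable. If the length is at least $\omega_1$, or some quotient has uncountably many summands, then $\A$ possesses an uncountable increasing sequence of ideals (in the latter case the preimages of the initial-segment $c_0$-sums within a single quotient), and the last-section observation produces the semibiorthogonal system directly. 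The delicate case is when the nonseparability is concentrated in a single elementary block $\mathcal{K}(H)$ with $\dim H\ge\omega_1$ occurring as a summand of some subquotient: this block is simple and hence invisible to the ideal lattice. There I would fix an orthonormal family $(\xi_\alpha)_{\alpha<\omega_1}$ in $H$, take as $\tau_\alpha$ the pull-backs of the corresponding vector states along the quotient map onto $\mathcal{K}(H)$ (extended to pure states of $\A$), and take as $a_\alpha$ lifts of the rank-one projections onto $\xi_\alpha$; the relations $\tau_\alpha(a_\beta)=\delta_{\alpha\beta}$ hold because everything factors through $\mathcal{K}(H)$, where vector states and rank-one projections are exactly biorthogonal. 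The care lies precisely in this case split and in verifying that these three sources of nonseparability are exhaustive.
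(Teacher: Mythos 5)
Your proof is correct, but it takes a genuinely different route from the paper's at the decisive step. The paper splits on the length $\beta$ of the Cantor--Bendixson composition series: when $\beta\geq\omega_1$ it builds the positive semibiorthogonal system of pure states from the increasing chain of ideals (Lemma \ref{existence_family_positive}) and then runs the PFA machinery (Theorem \ref{pfateorema}, Proposition \ref{discrete-positive}, Theorem \ref{discretalmost}), exactly as you do; but when $\beta<\omega_1$ it constructs no semibiorthogonal system at all --- it quotes Proposition 3.16 of \cite{hida2018large}: some Cantor--Bendixson quotient must then be nonseparable, such a quotient has an uncountable irredundant set, and irredundant sets of projections lift from quotients, all in ZFC. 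You instead refine the short-length case into two subcases (uncountably many elementary summands in a single subquotient, or a single block $\mathcal{K}(H)$ with $\dim H\geq\omega_1$) and construct the semibiorthogonal system directly in each: via preimages of the initial-segment $c_0$-sums in the first subcase (these are indeed ideals of $\A$, since a closed ideal of a closed ideal is an ideal in a C*-algebra --- worth citing), and via pulled-back vector states together with positive lifts of the rank-one projections in the second. This proves a cleaner and formally stronger structural statement than the paper does (every nonseparable scattered C*-algebra has an uncountable positive semibiorthogonal system of pure states, full stop) and makes the argument self-contained, at the cost of invoking PFA where it is not needed: in your two new subcases the systems satisfy $\tau_\alpha(a_\beta)=\delta_{\alpha\beta}$, i.e.\ they are honest positive biorthogonal systems, so Proposition \ref{states-irredundance} (equivalently Lemma 3.14 of \cite{hida2018large}) already yields an uncountable irredundant set there in ZFC, which is what the paper's shortcut achieves. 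Two points would need a line each in a complete write-up: the exhaustiveness of your trichotomy (a composition series of countable length with separable subquotients produces a separable algebra, by transfinite induction taking closures of unions at countable limits), and your claim that excision and the real-rank-zero refinement can be ``realized inside $\A$'' in the unitization step --- for instance, projections $p\in\A$ excising $\tau|_{\A}$ also excise the extension $\tau$ on $\tilde{\A}$, because for $b=(b',\lambda)$ the extra error term is $|\lambda|\,\|p^2-p\|=0$. This last care is actually welcome: it is more robust than the paper's own reduction, which asserts that every projection of $\tilde{\A}$ is either $(0,1)$ or lies in $\A$, overlooking projections of the form $1-p$ with $p\in\A$.
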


Theorem \ref{main_result_intro} complements the construction in  Theorem 6.2 of \cite{hida2018large}, which can be proved that it is a consistent example of a nonseparable scattered C*-algebra without an uncountable almost irredundant set of projections.

%$$\textrm{ideal} \Rightarrow \textrm{Positive semi-biorthogonal} \Rightarrow \textrm{Discrete set}\Rightarrow \textrm{ Almost irredundant}$$

Notation and terminology should be standard: Let $\A$ be a C-algebra. The set of all self-adjoint elements of $\A$ will be denoted by $\A_{sa}$ and the set of all positive elements will be denoted by $\A_+$, and $\A_{+,1}$ will be the set of positive elements of norm one.

We denote by $S(A)$ the set of all states of $\A$ equipped with the weak*-topology. By $P(\A)$ we denote the extreme points of $S(A)$, the set of all pure states of $\A$. See  \cite{kenneth1996c}, \cite{dixmier1977c}, \cite{li1992introduction},  \cite{murphy1990c} and \cite{pedersen1979c} for other definitions and results about C*-algebras.

\section{Almost irredundant sets and UBABS+}
In this section we study the relation between irredundance and UBABS+. We will prove that the existence of an UBABS+ of type $\eta$ for some $0\leq \eta< 1$ is enough to guarantee the existence of an uncountable almost irredundant set.

We start by proving the following simple inequality about states and positive elements:

\begin{lemma}\label{inequalityalmost}Let $\A$ be an unital C*-algebra. Suppose $a_1, \cdots, a_n$ are self adjoint elements such that $0\leq a_i\leq 1$ and $\tau(a_i)<\varepsilon$ for $\tau$ a state. Then 
$$\tau(a_1 \cdots a_n)\leq \sqrt{\varepsilon}.$$

\end{lemma}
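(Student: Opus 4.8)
The plan is to reduce the estimate to a single application of the Cauchy--Schwarz inequality for the state $\tau$, after isolating one of the factors. Recall that every state satisfies $|\tau(y^*x)|^2 \le \tau(x^*x)\,\tau(y^*y)$ for all $x,y \in \A$, since $(x,y) \mapsto \tau(y^*x)$ is a positive semidefinite sesquilinear form. I will also use two elementary consequences of the hypothesis $0 \le a_i \le 1$: first, that $a_i^2 \le a_i$ (equivalently $a_i(1-a_i) \ge 0$), so that $\tau(a_i^2) \le \tau(a_i) < \varepsilon$; and second, that $\|a_i\| \le 1$, so that any product of the $a_i$'s has norm at most $1$.

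Concretely, I would write $a_1 a_2 \cdots a_n = y^* x$ with $y = a_1$ (self-adjoint, so $y^* = a_1$) and $x = a_2 \cdots a_n$. Cauchy--Schwarz then gives $|\tau(a_1 \cdots a_n)|^2 \le \tau(a_1^2)\,\tau(x^*x)$. The first factor is at most $\varepsilon$ by the remark above. For the second factor, since $\|x\| \le \prod_{i=2}^n \|a_i\| \le 1$ we have $x^*x \le \|x\|^2\,1 \le 1$, and applying the positive, unital state $\tau$ yields $\tau(x^*x) \le \tau(1) = 1$. Combining, $|\tau(a_1 \cdots a_n)|^2 \le \varepsilon$, whence $\tau(a_1 \cdots a_n) \le |\tau(a_1 \cdots a_n)| \le \sqrt{\varepsilon}$.

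There is essentially no deep obstacle here; the only point requiring a little care is that a product of self-adjoint elements need not be self-adjoint, so $\tau(a_1 \cdots a_n)$ is a priori a complex number. This is precisely why I do not attempt to manipulate the product as a positive element, but instead bound its modulus through Cauchy--Schwarz, which costs the square root and explains the $\sqrt{\varepsilon}$ in the conclusion. Unitality of $\A$ enters only through the normalization $\tau(1) = 1$, which I use to estimate $\tau(x^*x)$. One could alternatively proceed by induction on $n$, peeling off one factor at a time, but the single split above already gives the stated bound with the constant $\sqrt{\varepsilon}$ independent of $n$.
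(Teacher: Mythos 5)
Your proof is correct, and it is a mild but genuine variant of the paper's argument rather than a reproduction of it. The paper first squares the \emph{entire} product, using the one-variable Cauchy--Schwarz inequality $|\tau(a)|^2\le\|\tau\|\,\tau(a^*a)$ with $a=a_1\cdots a_n$, and then peels off factors one at a time by repeated use of $\tau(b^*a^*ab)\le\|a^*a\|\,\tau(b^*b)$, so that after $n-1$ peelings only $\tau(a_n^2)\le\tau(a_n)<\varepsilon$ survives; there the hypothesis is consumed by the outermost factor $a_n$, and no unit is needed. You instead apply the two-variable Cauchy--Schwarz inequality $|\tau(y^*x)|^2\le\tau(x^*x)\,\tau(y^*y)$ exactly once, with $y=a_1$ carrying the smallness (via $a_1^2\le a_1$) and the whole tail $x=a_2\cdots a_n$ disposed of in one stroke by the crude bound $\tau(x^*x)\le\|x\|^2\,\tau(1)\le 1$. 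What your route buys is the absence of any iteration, with the constant $\sqrt{\varepsilon}$ visibly independent of $n$; what it appears to cost is the appeal to the unit, but even that is inessential, since $\tau(x^*x)\le\|\tau\|\,\|x\|^2$ holds for any bounded positive functional, so your argument also works verbatim in the non-unital case. Note that in both arguments the smallness of only one factor is actually used ($\tau(a_1)<\varepsilon$ for you, $\tau(a_n)<\varepsilon$ for the paper), and both really establish the stronger conclusion $|\tau(a_1\cdots a_n)|\le\sqrt{\varepsilon}$, which, as you correctly point out, is the form one needs in Theorem \ref{discretalmost} because the product of self-adjoint elements need not be self-adjoint.
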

\begin{proof}$ $
The result follows from repeatedly applying the inequalities  $|\tau(a)|^2\leq \|\tau\|\tau(a^*a)$ and ${\tau(b^*a^*ab)\leq \|a^*a\|\tau(b^*b)}$, which holds for every positive linear functional $\tau$ and every $a,b \in \A$ (see Theorem 3.3.2 and Theorem 3.3.7 of \cite{murphy1990c}).

%Using (2) and (3) of Proposition \ref{inequalities-positive} repeatedly, we get that:

%$$|\tau(a_1 \cdots a_n)|^2\leq \tau(a_1^2)\tau((a_2 \cdots a_n)^*(a_2 \cdots a_n))\leq \tau((a_2 \cdots a_n)^*(a_2 \cdots a_n))=$$
%$$\tau((a_3 \cdots a_n)^*a_2^*a_2(a_3 \cdots a_n))\leq \|a_2^*a_2\|\tau((a_3 \cdots a_n)^*(a_3 \cdots a_n))\leq$$
%$$\leq\tau((a_3 \cdots a_n)^*(a_3 \cdots a_n))\leq\cdots \leq \tau(a_n^*a_n)=\tau(a_n^2)\leq \tau(a_n)\leq \varepsilon$$

\end{proof}

\begin{theorem}\label{discretalmost}

Let $\A$ be an unital C*-algebra. Suppose $(a_\alpha,\tau_\alpha)_{\alpha<\omega_1}$ is an UBABS+ of type $\eta$ for some $0\leq \eta<1$.
Then $\mathcal{X} = \{a_\alpha: \alpha<\omega_1\}$ is an almost irredundant set.
\end{theorem}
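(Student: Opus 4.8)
The plan is to use the distinguished state $\tau_\alpha$ to witness that $a_\alpha$ stays a uniform positive distance from every element of the set appearing in Definition \ref{almost}. Fix $\alpha < \omega_1$ and write
$$D = \Big\{\sum_{i=1}^n \lambda_i \prod_{j=1}^{n_i} a_{i,j} : a_{i,j} \in \mathcal{X}\setminus\{a_\alpha\},\ \sum_i |\lambda_i| \leq 1\Big\}.$$
It suffices to show that $a_\alpha \notin \overline{D}$; since $\alpha$ is arbitrary, this yields that $\mathcal{X}$ is almost irredundant.

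First I would bound $|\tau_\alpha(d)|$ uniformly over $d \in D$. Every factor $a_{i,j}$ equals some $a_\beta$ with $\beta \neq \alpha$; since $a_\beta$ is positive with $0 \leq a_\beta \leq 1$ and the system is of type $\eta$, we have $0 \leq \tau_\alpha(a_\beta) \leq \eta$. Thus each factor of a product $\prod_{j=1}^{n_i} a_{i,j}$ is a self-adjoint element of $[0,1]$ whose $\tau_\alpha$-value is at most $\eta$, so Lemma \ref{inequalityalmost}, applied with any $\varepsilon > \eta$ and then letting $\varepsilon \to \eta^+$, gives $|\tau_\alpha(\prod_{j=1}^{n_i} a_{i,j})| \leq \sqrt{\eta}$ for each $i$; this also covers single-factor products, since $\eta \leq \sqrt{\eta}$. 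By the triangle inequality together with $\sum_i |\lambda_i| \leq 1$, I obtain $|\tau_\alpha(d)| \leq \sqrt{\eta}$ for every $d \in D$.

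Finally, since $\tau_\alpha$ is a state it is contractive and satisfies $\tau_\alpha(a_\alpha) = 1$, so for any $d \in D$,
$$\|a_\alpha - d\| \geq |\tau_\alpha(a_\alpha - d)| \geq |\tau_\alpha(a_\alpha)| - |\tau_\alpha(d)| \geq 1 - \sqrt{\eta}.$$
Because $\eta < 1$ we have $1 - \sqrt{\eta} > 0$, so $a_\alpha$ lies at distance at least $1 - \sqrt{\eta}$ from every point of $D$, hence from $\overline{D}$; in particular $a_\alpha \notin \overline{D}$, as required.

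The only delicate point is the middle step: the product $\prod_{j} a_{i,j}$ of positive elements need not be self-adjoint, so Lemma \ref{inequalityalmost} must be read as bounding $|\tau_\alpha(\prod_{j} a_{i,j})|$, and one must reconcile the strict hypothesis $\tau(a_i) < \varepsilon$ of the lemma with the non-strict bound $\tau_\alpha(a_\beta) \leq \eta$ available here, which is exactly why I work with $\varepsilon > \eta$ and pass to the limit. Everything else is a routine use of the triangle inequality and the fact that states have norm one, and the argument produces the uniform gap $1 - \sqrt{\eta}$ that separates $a_\alpha$ from the closure.
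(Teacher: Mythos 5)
Your proof is correct and takes essentially the same approach as the paper's: both use the functional $\tau_\alpha$ together with Lemma \ref{inequalityalmost} to bound $|\tau_\alpha(d)|\leq\sqrt{\eta}$ over all elements $d$ of the product-combination set, then exploit $\|\tau_\alpha\|=1$ and $\tau_\alpha(a_\alpha)=1$ to separate $a_\alpha$ from its closure; the paper phrases this as a contradiction with $\varepsilon+\sqrt{\eta}<1$, whereas you state the equivalent uniform gap $\|a_\alpha-d\|\geq 1-\sqrt{\eta}$ directly. Your limiting step ($\varepsilon\to\eta^{+}$) to reconcile the strict hypothesis $\tau(a_i)<\varepsilon$ of Lemma \ref{inequalityalmost} with the non-strict bound $\tau_\alpha(a_\beta)\leq\eta$ is a careful detail the paper passes over silently.
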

\begin{proof}
Suppose $\mathcal{X} = \{a_\alpha: \alpha<\omega_1\}$ is not an almost irredundant set and lets get a contradiction.
Consider $\alpha<\omega_1$ such that $a_\alpha$ belongs to the closure of 
$$\{a=\sum_{i=1}^n \lambda_i \prod_{j=1}^{n_i}a_{\beta_{i,j}}: \textrm{ where } \beta_{i,j}\neq \alpha \textrm{ and} \sum |\lambda_i|\leq1\}.$$

Consider $\varepsilon>0$ such that $\varepsilon + \sqrt{\eta}<1$ and $a=\sum_{i=1}^n \lambda_i \prod_{j=1}^{n_i}a_{\beta_{i,j}}$ such that $\|a_\alpha-a\|<\varepsilon.$ In particular,
$$|\tau_\alpha(a_\alpha-a)|< \varepsilon.$$
On the other hand,

$$|\tau_\alpha(a)| = |\tau_\alpha(\sum_{i=1}^n \lambda_i \prod_{j=1}^{n_i}a_{\beta_{i,j}})|=|\sum_{i=1}^n \lambda_i \tau_\alpha(\prod_{j=1}^{n_i}a_{\beta_{i,j}})|\leq \sum_{i=1}^n |\lambda_i| |\tau_\alpha(\prod_{j=1}^{n_i}a_{\beta_{i,j}})|\leq $$
$$\stackrel{*}{\leq} \sum_{i=1}^n |\lambda_i|\sqrt{\eta}\leq \sqrt{\eta}$$
where in $*$ we applied Lemma \ref{inequalityalmost}.
But then, 
$$1 = |\tau_\alpha(a_\alpha)| = |\tau_\alpha(a_\alpha-a) + \tau_\alpha(a)|\leq |\tau_\alpha(a_\alpha-a)| + |\tau_\alpha(a)| <\varepsilon + \sqrt{\eta} <1$$
which is a contradiction.

This proves that $\mathcal{X} = \{a_\alpha: \alpha<\omega_1\}$ is an almost irredundant set in $\A$. 
\end{proof}

We would like to conclude this section with some results about UBABS+ of type $\eta=0$.
 
If $(a_\alpha,\tau_\alpha)_{\alpha<\omega_1}$ is a UBABS+ of type $\eta=0$, then $(a_\alpha,\tau_\alpha)_{\alpha<\omega_1}$ is a biorthogonal system such that $\tau_\alpha$ is a positive linear functional and $a_\alpha$ is a posive element for every $\alpha<\omega_1$. For simplicity of notation, we simply say that a $(a_\alpha,\tau_\alpha)_{\alpha<\omega_1}$ is a positive biorthogonal system.

By Lemma 3.14 of \cite{hida2018large}, if a C*-algebra has an uncountable positive biorthogonal system, then it contains an uncountable irredundant set. Actually, it can be proved that the existence of a positive biorthogonal system is equivalently with the following stronger version of irredundance:

\begin{definition}Let $\mathcal{A}$ be a C*-algebra. A subset $\mathcal X\subseteq \mathcal{A}_{sa}$
 is called h-irredundant  if and only if for every $a \in \mathcal X$, the 
 hereditary C*-subalgebra of $\mathcal{A}$ generated by $\mathcal X\setminus\{a\}$ 
 does not contain $a$.
\end{definition}

\begin{proposition}\label{states-irredundance}Let $\mathcal{A}$ be C*-algebra and $\mathcal{X} = \{a_\alpha: \alpha<\kappa\}$ a sequence in $\mathcal{A}_{+}$. Then $\mathcal{X}$ is h-irredundant if and only if for each $\alpha<\kappa$, there is a positive functional $\tau_\alpha$ such that 
$(a_\alpha,\tau_\alpha)_{\alpha<\omega_1}$ is a positive biorthogonal system.
\end{proposition}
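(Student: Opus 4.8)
The plan is to prove the two implications separately, after reformulating $h$-irredundance geometrically. Write $H_\alpha$ for the hereditary C*-subalgebra of $\mathcal{A}$ generated by $\{a_\beta : \beta \neq \alpha\}$; by definition $\mathcal{X}$ is $h$-irredundant precisely when $a_\alpha \notin H_\alpha$ for every $\alpha$. Since each $a_\beta$ with $\beta \neq \alpha$ lies in $H_\alpha$, a positive functional $\tau_\alpha$ satisfying $\tau_\alpha|_{H_\alpha}=0$ and $\tau_\alpha(a_\alpha)=1$ automatically provides the required biorthogonality $\tau_\alpha(a_\beta)=0$; conversely, the content of the backward implication is that biorthogonality alone already forces vanishing on all of $H_\alpha$.

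For the backward direction, suppose positive functionals $\tau_\alpha$ with $\tau_\alpha(a_\alpha)=1$ and $\tau_\alpha(a_\beta)=0$ (for $\beta\neq\alpha$) are given. I would first record the elementary fact that for any positive functional $\tau$ the left kernel $N_\tau=\{x\in\mathcal{A}:\tau(x^*x)=0\}$ is a closed left ideal contained in $\ker\tau$ (the containment being Cauchy--Schwarz, $|\tau(x)|\le\|\tau\|^{1/2}\tau(x^*x)^{1/2}$). The key lemma is: \emph{if $\tau$ is positive and $\tau(b_i)=0$ for a family $(b_i)$ of positive elements, then $\tau$ vanishes on the hereditary C*-subalgebra they generate.} Indeed $\tau(b_i)=0$ gives $b_i^{1/2}\in N_\tau$; repeated use of Cauchy--Schwarz then shows $\tau$ annihilates every word $b_{i_1}\cdots b_{i_k}$, hence the generated C*-subalgebra $C$, and finally (taking square roots inside $C$ and applying Cauchy--Schwarz once more) the whole of $\overline{C\mathcal{A}C}=H_\alpha$. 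Applying this to $\tau_\alpha$ yields $\tau_\alpha|_{H_\alpha}=0$, so $\tau_\alpha(a_\alpha)=1\neq 0$ forces $a_\alpha\notin H_\alpha$, i.e. $\mathcal{X}$ is $h$-irredundant.

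For the forward direction, which I expect to be the main obstacle, the task is to extract from the single piece of information $a_\alpha\notin H_\alpha$ a \emph{positive} separating functional; plain Hahn--Banach only furnishes a bounded one. I would pass to the bidual $\mathcal{A}^{**}$ and use the open projection $p\in\mathcal{A}^{**}$ associated to $H_\alpha$, so that $H_\alpha=\mathcal{A}\cap p\mathcal{A}^{**}p$ and $p=\sup e_\lambda$ (weak* limit) for an approximate unit $(e_\lambda)$ of $H_\alpha$. Two facts drive the argument. First, identifying states of $\mathcal{A}$ with their normal extensions to $\mathcal{A}^{**}$, a state $\tau$ satisfies $\tau|_{H_\alpha}=0$ iff $\tau(p)=0$ (one inclusion from $h\le\|h\|p$ for $h\in(H_\alpha)_+$, the other from normality and $p=\sup e_\lambda$). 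Second, for such $\tau$ the cross terms vanish by Cauchy--Schwarz, so $\tau(a_\alpha)=\tau(qa_\alpha q)$ with $q=1-p$. I then argue by contraposition: if every state with $\tau(p)=0$ had $\tau(a_\alpha)=0$, then $\tau(qa_\alpha q)=0$ for all normal states of the corner $q\mathcal{A}^{**}q$ (a W*-algebra with unit $q$), whence $\|qa_\alpha q\|=0$; since $a_\alpha\ge 0$ this gives $a_\alpha^{1/2}(1-p)=0$ and therefore $a_\alpha=pa_\alpha p\in\mathcal{A}\cap p\mathcal{A}^{**}p=H_\alpha$, contradicting $h$-irredundance. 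Hence there is a state $\tau$ with $\tau|_{H_\alpha}=0$ and $\tau(a_\alpha)>0$; putting $\tau_\alpha=\tau/\tau(a_\alpha)$ yields a positive functional with $\tau_\alpha(a_\alpha)=1$ and $\tau_\alpha(a_\beta)=0$ for all $\beta\neq\alpha$, completing the positive biorthogonal system.

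The delicate point lies entirely in the forward direction: positivity of the separating functional cannot be obtained from convexity alone and must be recovered from the order structure of $H_\alpha$, which is the reason for passing to $\mathcal{A}^{**}$ and phrasing everything through the open projection $p$. Should one wish to avoid the bidual, the same content can be packaged via Akemann's correspondence between hereditary C*-subalgebras and weak*-closed faces of $S(\mathcal{A})$, the relevant face being the set of states annihilating $H_\alpha$, which is nonempty exactly because $a_\alpha\notin H_\alpha$.
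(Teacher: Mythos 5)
Your proof is correct, but the forward implication takes a genuinely different route from the paper's. The backward implication is essentially the paper's argument: the paper observes that each $a_\beta$ ($\beta\neq\alpha$) lies in the left kernel $N_{\tau_\alpha}$, which is a closed left ideal and therefore yields a hereditary C*-subalgebra containing $\mathcal{X}\setminus\{a_\alpha\}$ but not $a_\alpha$; your key lemma on vanishing over $\overline{C\mathcal{A}C}$ is the same Cauchy--Schwarz computation, merely organized as an explicit verification rather than an appeal to left kernels being hereditary. The real divergence is in the forward implication. The paper works inside the C*-subalgebra $\mathcal{B}$ generated by the whole family: it invokes Theorem 5.3.1 of Murphy to produce a nonzero positive functional on $\mathcal{B}$ vanishing on the hereditary subalgebra generated by $\mathcal{X}\setminus\{a_\alpha\}$, then runs a separate left-ideal argument to rule out $\tau_\alpha'(a_\alpha)=0$ (otherwise $N_{\tau_\alpha'}$ would contain every generator and hence all of $\mathcal{B}$), and finally extends the functional from $\mathcal{B}$ to $\mathcal{A}$. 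You instead work directly in $\mathcal{A}$ and reprove the separation result from scratch in the enveloping von Neumann algebra: states annihilating $H_\alpha$ correspond exactly to normal states of the corner $(1-p)\mathcal{A}^{**}(1-p)$, and if all of them killed $a_\alpha$, then $qa_\alpha q=0$ would force $a_\alpha=pa_\alpha p\in\mathcal{A}\cap p\mathcal{A}^{**}p=H_\alpha$. Your route buys three things: the non-vanishing at $a_\alpha$ comes for free from the contrapositive instead of needing a separate step; there is no extension step; and you never need the paper's assertion $H_\alpha\subsetneq\mathcal{B}$, which read literally (with $H_\alpha$ hereditary in $\mathcal{A}$) is questionable, since $\overline{C\mathcal{A}C}$ generally sticks out of $\mathcal{B}$ --- the paper implicitly means the hereditary subalgebra of $\mathcal{B}$, which suffices for its purposes but deserves saying. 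The cost is heavier machinery: you rely on the Akemann--Pedersen correspondence between hereditary C*-subalgebras and open projections ($H_\alpha=\mathcal{A}\cap p\mathcal{A}^{**}p$ with $p=\sup e_\lambda$), whereas the paper hides its depth in the single citation of Murphy; both are black boxes of comparable weight, just different ones.
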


\begin{proof}
$(\Rightarrow)$ Suppose $(a_\alpha)_{\alpha<\kappa}$ is h-irredundant. For each $\alpha$, consider $H_\alpha$ the hereditary C*-subalgebra generated by $\{a_\beta:\beta<\kappa, \beta\neq \alpha\}$. Let $\mathcal{B}$ be the C*-subalgebra generated by $(a_\alpha)_{\alpha<\kappa}$.

For each $\alpha<\kappa$, as $H_\alpha \varsubsetneq \mathcal{B}$, by Theorem 5.3.1 of \cite{murphy1990c}, there is a non null positive functional $\tau_\alpha'$ on $\mathcal{B}$ vanishing on $H_\alpha$. We have that 
$\tau_\alpha'(a_\alpha)\neq 0$. Otherwise, we would have $a_\alpha$ in the left kernel $N_{\tau_\alpha'} = \{b\in \mathcal{B}: \tau_\alpha'(b^*b)=0\}$ and since $H_\alpha\subset N_{\tau_\alpha'}$ we would get $\mathcal{B}= N_{\tau_\alpha'}$, contradiction with the fact that $\tau_\alpha'$ is non null on $\mathcal{B}$. To conclude, consider $\tau_\alpha$ the unique extension of $\tau_\alpha'$ on $\mathcal{A}$.

$(\Leftarrow)$. Analogous with the proof of Lemma 3.14 of \cite{hida2018large}: If there is a family of positive functionals $(\tau_\alpha)_\alpha$ separating $(a_\alpha)_\alpha$, then for each $\alpha<\kappa$, $a_\alpha \notin N_{\tau_\alpha}$ but $a_\beta \in N_{\tau_\alpha}$ for each $\beta \neq \alpha$. Since left kernels are left ideais and in particular, hereditary C*-subalgebras, we conclude that $(a_\alpha)_{\alpha<\kappa}$ is h-irredundant.
\end{proof}

In the case of scattered C*-algebras, we have the following:

\begin{corollary}If $\A$ is a scattered C*-algebra, then $\A$ has a h-irredundant set of cardinality $\kappa$ if and only if $\A$ has a positive biorthogonal system 
$(a_\alpha,\tau_\alpha)_{\alpha<\kappa}$ such that $\tau_\alpha$ is a pure state for each $\alpha<\kappa$.

\end{corollary}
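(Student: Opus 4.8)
The plan is to deduce the corollary from Proposition~\ref{states-irredundance} by refining the positive functionals it produces into pure states, the refinement being exactly where scatteredness enters. The implication $(\Leftarrow)$ is immediate: if $(a_\alpha,\tau_\alpha)_{\alpha<\kappa}$ is a positive biorthogonal system with each $\tau_\alpha$ a pure state, then it is in particular a positive biorthogonal system, so the $(\Leftarrow)$ direction of Proposition~\ref{states-irredundance} shows that $\{a_\alpha:\alpha<\kappa\}$ is h-irredundant; it has cardinality $\kappa$ because the $\tau_\alpha$ separate the $a_\alpha$.

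For $(\Rightarrow)$ I would first reduce to positive generators, since Proposition~\ref{states-irredundance} is stated for families in $\A_+$ while an h-irredundant set a priori consists of self-adjoint elements. Given an h-irredundant $\{a_\alpha\}\subseteq\A_{sa}$ of cardinality $\kappa$, let $H_\alpha$ be the hereditary C*-subalgebra generated by $\{a_\beta:\beta\neq\alpha\}$. For each $\alpha$ at least one of $a_\alpha^+,a_\alpha^-$ lies outside $H_\alpha$, since otherwise $a_\alpha=a_\alpha^+-a_\alpha^-\in H_\alpha$; call this positive element $c_\alpha$. As each $a_\beta$ (for $\beta\neq\alpha$) lies in $H_\alpha$ and $H_\alpha$ is a C*-subalgebra, we have $C^*(a_\beta)\subseteq H_\alpha$, so both $a_\beta^+,a_\beta^-\in H_\alpha$; hence the hereditary subalgebra generated by $\{c_\beta:\beta\neq\alpha\}$ is contained in $H_\alpha$ and still omits $c_\alpha$. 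Thus $\{c_\alpha:\alpha<\kappa\}\subseteq\A_+$ is h-irredundant of cardinality $\kappa$, distinctness being forced by h-irredundance itself.

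Next I would apply the $(\Rightarrow)$ direction of Proposition~\ref{states-irredundance} to $\{c_\alpha\}$, obtaining for each $\alpha$ a positive functional $\tau_\alpha$ on $\A$ with $\tau_\alpha(c_\alpha)\neq0$ and $\tau_\alpha(c_\beta)=0$ for all $\beta\neq\alpha$. The crucial step is then to use that $\A$ is scattered: every positive functional on a scattered C*-algebra is atomic, i.e.\ a norm-convergent sum $\tau_\alpha=\sum_n\lambda_n\phi_n$ with $\lambda_n>0$ and each $\phi_n$ a pure state (see \cite{jensen1978scattered}). For every $\beta\neq\alpha$ we have $0=\tau_\alpha(c_\beta)=\sum_n\lambda_n\phi_n(c_\beta)$ with all summands nonnegative (as $c_\beta\geq0$), so $\phi_n(c_\beta)=0$ for every $n$; and since $\tau_\alpha(c_\alpha)\neq0$, some summand yields a pure state $\psi_\alpha:=\phi_{n_0}$ with $\psi_\alpha(c_\alpha)>0$. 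After rescaling each $c_\alpha$ by $1/\psi_\alpha(c_\alpha)$, the family $(c_\alpha,\psi_\alpha)_{\alpha<\kappa}$ is a positive biorthogonal system of pure states of cardinality $\kappa$, as required.

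The only genuinely nontrivial ingredient, and the one I expect to be the crux, is the atomic decomposition of positive functionals: this is precisely the property characterizing scattered C*-algebras that fails in general, and it is what lets a single pure state simultaneously annihilate all the $c_\beta$ (for $\beta\neq\alpha$) while remaining nonzero at $c_\alpha$ — the simultaneity being guaranteed by the nonnegativity of each $\phi_n(c_\beta)$. The self-adjoint-to-positive reduction and the final rescaling are routine bookkeeping.
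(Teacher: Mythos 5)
Your proof is correct and follows essentially the same route as the paper: combine Proposition~\ref{states-irredundance} with the fact that every positive functional on a scattered C*-algebra is atomic, then extract a pure state from the atomic decomposition using positivity of the terms. Your reduction from self-adjoint to positive generators is a worthwhile extra detail, since the paper applies Proposition~\ref{states-irredundance} (stated for families in $\A_+$) to h-irredundant sets (defined in $\A_{sa}$) without comment.
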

\begin{proof}
It follows from Proposition \ref{states-irredundance} and by the fact that every positive linear functional on a scattered C*-algebra is atomic (See \cite{jensen1978scattered, jensen1979scattered}. Also Theorem 4.8 of \cite{ghasemi2018noncommutative}).
\end{proof}

\section{Discrete set of states and UBABS+}

In this section we prove that the existence of an uncountable discrete set of pure states is enough to guarantee the existence of a UBABS+ of type $\eta$ for some $0\leq \eta<1$.

Recall the following notion of excision of states:
\begin{definition}\label{excision}Let $\A$ be a C*-algebra and $\tau \in S(\A)$. We say that a net of positive elements $(a_\alpha)_{\alpha<\kappa}$ in $\A_{+,1}$ is a excision of $\tau$ if 
$$ \lim_\alpha \|a_\alpha b a_\alpha -\tau(b)a_\alpha\| = 0$$
for every $b\in \A$.

In this case, we say that $\tau$ can be excised.
\end{definition}

The following result give us some insight into the weak*-neighbourhood of a state that can be excised. (see \cite{farah2019combinatorial}, section 5.2 for details):

\begin{proposition}\label{openneighbourhood}Let $\A$ be a C*-algebra and $\tau\in S(\A)$. Suppose that $\tau$ can be excised. Then the sets 
$$U_{a,\varepsilon}:=\{\sigma: \sigma(a)>1-\varepsilon\},$$
where $\varepsilon>0$ and $a\in \A_{+,1}$ is such that $\tau(a)=1$, form a local neighbourhood basis of $\tau$ in $ S(\A)$.
Moreover, if $\A$ is a unital C*-algebra of real rank zero, then we can take $a$ as projections.
\end{proposition}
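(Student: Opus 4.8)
The plan is to verify the two requirements of a local neighbourhood basis. First, each $U_{a,\varepsilon}$ is a weak$^*$-open neighbourhood of $\tau$: the evaluation $\sigma\mapsto\sigma(a)$ is weak$^*$-continuous, so $U_{a,\varepsilon}=\{\sigma:\sigma(a)>1-\varepsilon\}$ is open, and the hypothesis $\tau(a)=1>1-\varepsilon$ puts $\tau$ inside it. The substance is the converse: given an arbitrary weak$^*$-neighbourhood of $\tau$, which I may take to be basic, $V=\{\sigma\in S(\A):|\sigma(b_k)-\tau(b_k)|<\delta,\ k=1,\dots,m\}$ with $\delta>0$ and (after rescaling) $\|b_k\|\le 1$, I must produce $a\in\A_{+,1}$ with $\tau(a)=1$ and $\varepsilon>0$ such that $U_{a,\varepsilon}\subseteq V$. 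Passing to the unitization if $\A$ is not unital, I treat $\tau$ and the $\sigma$'s as states on a unital algebra.

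The heart of the argument is the following estimate, in which excision makes $a$ behave like an \emph{approximate support projection} of $\tau$. Suppose $a\in\A_{+,1}$ satisfies $\|a b_k a-\tau(b_k)a\|<\delta/3$ for every $k$, and let $\sigma\in U_{a,\varepsilon}$. Writing $c=1-a$, so that $0\le c\le 1$ and $\sigma(c)=1-\sigma(a)<\varepsilon$, I expand $b_k=a b_k a+a b_k c+c b_k a+c b_k c$ and bound the last three terms by the Cauchy--Schwarz inequality $|\sigma(xy)|\le\sigma(xx^*)^{1/2}\sigma(y^*y)^{1/2}$ together with $\sigma(c^2)\le\sigma(c)<\varepsilon$; this yields $|\sigma(b_k)-\sigma(a b_k a)|\le 2\sqrt{\varepsilon}+\varepsilon$. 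Combining this with $|\sigma(a b_k a)-\tau(b_k)\sigma(a)|\le\|a b_k a-\tau(b_k)a\|<\delta/3$ and with $|\tau(b_k)\sigma(a)-\tau(b_k)|\le|\sigma(a)-1|<\varepsilon$ gives $|\sigma(b_k)-\tau(b_k)|<\delta/3+2\sqrt{\varepsilon}+2\varepsilon$. Choosing $\varepsilon$ so small that $2\sqrt{\varepsilon}+2\varepsilon<2\delta/3$ makes the right-hand side less than $\delta$, so $U_{a,\varepsilon}\subseteq V$. Note that this computation never uses $\tau(a)=1$; it needs only the excision estimate and $a\in\A_{+,1}$.

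It remains to select such an $a$ that additionally satisfies $\tau(a)=1$ \emph{exactly}, and this is the step where the hypothesis that $\tau$ can be excised is genuinely used. The idea is to realise the excising elements as complements of an approximate unit of the kernel. Let $N_\tau=\{x\in\A:\tau(x^*x)=0\}$ and $H_\tau=N_\tau\cap N_\tau^*$, a hereditary C*-subalgebra, and let $(u_i)$ be an approximate unit of $H_\tau$ consisting of positive contractions. For each $i$, since $u_i\ge 0$ lies in $N_\tau$, Cauchy--Schwarz gives $\tau(u_i)^2\le\tau(1)\tau(u_i^2)=0$, hence $\tau(u_i)=0$ and $a_i:=1-u_i$ satisfies $\tau(a_i)=1$ (which also forces $\|a_i\|=1$, so $a_i\in\A_{+,1}$). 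That $\tau$ can be excised is exactly what guarantees that $(a_i)$ norm-excises $\tau$, i.e.\ $\|a_i b a_i-\tau(b)a_i\|\to 0$ for all $b$ (this is the content I would draw from \cite{akemann1986excising} and \cite{farah2019combinatorial}); feeding a sufficiently large $a_i$ into the previous paragraph produces the desired basis element. I expect this norm-excision of the complementary approximate unit to be the main obstacle, since it is a statement about the norm of $\A$ and cannot be read off from the GNS representation of $\tau$ alone.

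Finally, for the real rank zero case, Brown--Pedersen's characterisation says that every hereditary C*-subalgebra of $\A$ admits an approximate unit of projections; applying this to $H_\tau$ gives projections $p_i\in H_\tau$, whence the $a_i=1-p_i$ are projections with $\tau(a_i)=1$. Running the same estimate with these projections shows that the sets $U_{p,\varepsilon}$ with $p$ a projection and $\tau(p)=1$ already form a local neighbourhood basis, which is the ``moreover'' clause.
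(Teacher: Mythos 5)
Your first half is correct: the Cauchy--Schwarz computation showing that if $a\in\A_{+,1}$ satisfies $\|ab_ka-\tau(b_k)a\|<\delta/3$ for $k=1,\dots,m$ and $\sigma(a)>1-\varepsilon$, then $|\sigma(b_k)-\tau(b_k)|<\delta/3+2\sqrt{\varepsilon}+2\varepsilon$, is the routine half of the argument (the paper itself gives no argument here, only citations). The gap is precisely the step you flagged as the main obstacle and then delegated to the literature: it is \emph{not} true that excisability of $\tau$ makes the complements $a_i=1-u_i$ of an approximate unit of $H_\tau=N_\tau\cap N_\tau^*$ excise $\tau$; indeed no elements with $\tau(a)=1$ need excise $\tau$ at all. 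Concretely, let $\A=\K(H)+\C 1$ with $H$ separable infinite-dimensional, fix a unit vector $\xi$ and an orthonormal sequence $(\zeta_n)$ orthogonal to $\xi$, and put $\tau=\frac{1}{2}(\omega_\xi+\tau_\infty)$, where $\omega_\xi$ is the vector state of $\xi$ and $\tau_\infty(k+\lambda 1)=\lambda$. This $\tau$ can be excised: for $\eta_n=(\xi+\zeta_n)/\sqrt2$, the rank-one projections $P_{\eta_n}$ satisfy $P_{\eta_n}xP_{\eta_n}=\omega_{\eta_n}(x)P_{\eta_n}$, and $\omega_{\eta_n}\to\tau$ weak$^*$ (compactness of $k$ and weak convergence $\zeta_n\to 0$ kill all cross terms), so $\|P_{\eta_n}xP_{\eta_n}-\tau(x)P_{\eta_n}\|=|\omega_{\eta_n}(x)-\tau(x)|\to 0$. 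On the other hand, if $a\in\A_{+,1}$ and $\tau(a)=1$, then $\omega_\xi(a)=1$ (both $\omega_\xi(a)\le 1$ and $\tau_\infty(a)\le 1$, and their average is $1$), and $\omega_\xi(a)=1$ with $0\le a\le 1$ forces $(1-a)^{1/2}\xi=0$, i.e. $a\xi=\xi$, hence $aP_\xi a=P_\xi$. Since $\tau(P_\xi)=\frac{1}{2}$, every such $a$ gives $\|aP_\xi a-\tau(P_\xi)a\|\ge\|(P_\xi-\frac{1}{2}a)\xi\|=\frac{1}{2}$; so your net $(1-u_i)$, and any other net on which $\tau$ equals $1$, fails to excise $\tau$.

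The same example shows that the statement you were asked to prove is false in the stated generality, so this gap cannot be patched: $\omega_\xi\in U_{a,\varepsilon}$ for every admissible pair $(a,\varepsilon)$, yet $|\omega_\xi(P_\xi)-\tau(P_\xi)|=\frac{1}{2}$, so no $U_{a,\varepsilon}$ fits inside the neighbourhood $\{\sigma:|\sigma(P_\xi)-\tau(P_\xi)|<\frac{1}{4}\}$ of $\tau$. The missing hypothesis is purity, which is the only case the paper ever uses later (every pure state can be excised) and the case the cited sources are concerned with. For a \emph{pure} state $\varphi$ one has $\ker\varphi=N_\varphi+N_\varphi^*$, so every self-adjoint $b$ with $\varphi(b)=0$ is of the form $y+y^*$ with $y\in N_\varphi$; then your construction does work, since $\|a_i(y+y^*)a_i\|\le 2\|ya_i\|$ and $\|ya_i\|^2=\|a_iy^*ya_i\|\le\|y^*ya_i\|\to 0$ because $y^*y\in H_\varphi$. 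This is exactly the Akemann--Anderson--Pedersen argument \cite{akemann1986excising}, and purity enters at that decomposition, which you never invoke. A secondary point: the correct definition of excision requires $\|a_\alpha b a_\alpha-\tau(b)a_\alpha^2\|\to 0$ (with $a_\alpha^2$, as in \cite{akemann1986excising}), and with the paper's literal $\tau(b)a_\alpha$ your non-projection elements fail even for $\delta_0$ on $C[0,1]$: taking $u_n(t)=\min(nt,1)$ and $b=1$ gives $\sup_t(1-u_n)u_n=\frac{1}{4}$. Your real-rank-zero paragraph is the right idea, and for pure states it is genuinely cleaner, because there the $a_i=1-p_i$ are projections and the two versions of the excision identity coincide.
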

\begin{proof}
See Lemma 5.2.2 of \cite{farah2019combinatorial} and Proposition 3.13.16 of \cite{eilers2018c}.
\end{proof}

\begin{proposition}\label{discrete-positive}Let $\A$ be an unital C*-algebra. Suppose $(\tau_\alpha)_{\alpha<\omega_1}$ is an uncountable discrete set in $S(\A)$ such that $\tau_\alpha$ can be excised for every $\alpha<\omega_1$. Then there are an uncountable subfamily $(\beta_\alpha)_{\alpha<\omega_1}$  of $\omega_1$, an uncountable family  $(a_\alpha)_{\alpha<\omega_1}$ of $\A_{+,1}$ and $0\leq \eta <1$ such that $(a_\alpha, \tau_{\beta_\alpha})_{\alpha<\omega_1}$ is a UBABS+ of type $\eta$.
In particular, by Theorem \ref{discretalmost}, $(a_\alpha)_{\alpha<\omega_1}$ is an uncountable almost irredundant set. In the case where $\A$ is of real rank zero, we can assume that $(a_\alpha)_{\alpha<\omega_1}$ is a family of projections.

\end{proposition}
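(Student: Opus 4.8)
The plan is to combine the excision hypothesis, which pins down the weak*-topology near each state, with a pigeonhole argument that turns the resulting pointwise bounds into a single uniform separation constant. First I would fix $\alpha<\omega_1$. Since $(\tau_\alpha)_{\alpha<\omega_1}$ is discrete, the point $\tau_\alpha$ is isolated in $D=\{\tau_\beta:\beta<\omega_1\}$, so there is a weak*-open set $V$ with $V\cap D=\{\tau_\alpha\}$. Because $\tau_\alpha$ can be excised, Proposition \ref{openneighbourhood} guarantees that the sets $U_{a,\varepsilon}$ form a local neighbourhood basis at $\tau_\alpha$; hence I may choose $b_\alpha\in\A_{+,1}$ with $\tau_\alpha(b_\alpha)=1$ and $\varepsilon_\alpha>0$ so that $U_{b_\alpha,\varepsilon_\alpha}\subseteq V$. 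Consequently $\tau_\beta(b_\alpha)\leq 1-\varepsilon_\alpha$ for every $\beta\neq\alpha$, and since $b_\alpha\geq 0$ and each $\tau_\beta$ is positive we in fact have $0\leq\tau_\beta(b_\alpha)\leq 1-\varepsilon_\alpha$ (so $\varepsilon_\alpha\in(0,1]$).

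The crucial point is an asymmetry in the indices: discreteness gives us, for each \emph{element} $b_\alpha$, a bound $\varepsilon_\alpha$ on the value of every \emph{other} state, but this $\varepsilon_\alpha$ depends on $\alpha$ and could tend to $0$, whereas the definition of a UBABS+ demands one uniform type. To repair this I would apply a pigeonhole argument: partitioning $(0,1]$ into the countably many intervals $(1/(n+1),1/n]$, one of them contains $\varepsilon_\alpha$ for uncountably many $\alpha$, so there exist an uncountable set $I\subseteq\omega_1$ and a constant $\delta>0$ with $\varepsilon_\alpha>\delta$ for all $\alpha\in I$.

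Then I would enumerate $I$ as $(\beta_\alpha)_{\alpha<\omega_1}$, set $a_\alpha:=b_{\beta_\alpha}$ and $\eta:=1-\delta<1$, and check that $(a_\alpha,\tau_{\beta_\alpha})_{\alpha<\omega_1}$ is a UBABS+ of type $\eta$. Each $a_\alpha\in\A_{+,1}$ is a nonzero positive element of norm one, so $0<a_\alpha\leq 1$ and the family is bounded, while $\tau_{\beta_\alpha}(a_\alpha)=\tau_{\beta_\alpha}(b_{\beta_\alpha})=1$. For $\alpha\neq\alpha'$ injectivity of $\alpha\mapsto\beta_\alpha$ gives $\beta_\alpha\neq\beta_{\alpha'}$, whence
$$0\leq\tau_{\beta_\alpha}(a_{\alpha'})=\tau_{\beta_\alpha}(b_{\beta_{\alpha'}})\leq 1-\varepsilon_{\beta_{\alpha'}}<1-\delta=\eta,$$
the last inequality because $\beta_{\alpha'}\in I$; so $|\tau_{\beta_\alpha}(a_{\alpha'})|\leq\eta$. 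The almost irredundance of $\{a_\alpha:\alpha<\omega_1\}$ is then immediate from Theorem \ref{discretalmost}.

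Finally, for the real rank zero case I would simply invoke the last clause of Proposition \ref{openneighbourhood}, which lets each basis element $a$ be taken as a projection; choosing the $b_\alpha$ to be projections from the outset makes every $a_\alpha$ a projection, and nothing else in the argument changes. The only genuinely delicate step is recognizing the index asymmetry described above; the pigeonhole passage to the uncountable subfamily $(\beta_\alpha)_{\alpha<\omega_1}$ is precisely what converts the element-by-element bounds furnished by discreteness into the single uniform type $\eta$ required by the definition of a UBABS+.
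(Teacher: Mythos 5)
Your proof is correct and follows essentially the same route as the paper: isolate each $\tau_\alpha$ with a weak*-neighbourhood, use excision via Proposition \ref{openneighbourhood} to replace it by a basic set $U_{b_\alpha,\varepsilon_\alpha}$ (with $\tau_\alpha(b_\alpha)=1$, and projections in the real rank zero case), then pass to an uncountable subfamily with a uniform lower bound on $\varepsilon_\alpha$ to get the type $\eta<1$. Your interval-pigeonhole step $(1/(n+1),1/n]$ is in fact a slightly more careful rendering of the paper's "consider an uncountable subset $\Gamma$ and $\varepsilon>0$ such that $\varepsilon_\alpha=\varepsilon$", which strictly speaking requires exactly such an argument (or choosing the $\varepsilon_\alpha$ rational), so nothing further is needed.
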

\begin{proof}
For each $\alpha<\omega_1$, let $V_\alpha$ be a weak*-open neighbourhood of $\tau_\alpha$ such that $\tau_\beta \notin V_\alpha$ for every $\beta\neq \alpha$.
Since $\tau_\alpha$ can be excised, by Proposition \ref{openneighbourhood}, there are $a_\alpha \in \A_{+,1}$ and $\varepsilon_\alpha>0$ such that
$$\tau_\alpha \in \{\sigma: \sigma(a_\alpha)>1-\varepsilon_\alpha\}\subset V_\alpha.$$
Consider an uncountable subset $\Gamma \subset \omega_1$ and $\varepsilon>0$ such that $\varepsilon_\alpha = \varepsilon$ for every $\alpha \in \Gamma$. Then for $\alpha,\beta\in \Gamma$ with $\alpha\neq \beta$, as $\tau_\beta \notin V_\alpha$, we have that $\tau_\beta (a_\alpha)\leq 1-\varepsilon$.
This shows that $(a_\alpha,\tau_\alpha)_{\alpha\in \Gamma}$ is a UBABS+ of type $\eta =1-\varepsilon $.

\end{proof}

Since every pure state can be excised (Proposition 2.2 of \cite{akemann1986excising}), it follows from Proposition \ref{discrete-positive} that the existence of an uncountable discrete set of pure states is enough to guarantee the existence of an UBABS+ of type $\eta$, and therefore, an uncountable almost irredundant set.

\section{Semibiorthogonal system and uncountable discrete sets of states}\label{pfasection}
This section deals with positive semibiorthogonal system, discrete sets and UBABS+. We prove that under the Martin's Axiom, every C*-algebra with an uncountable positive semibiorthogonal system admits an uncountable discrete set of states and in the case of commutative C*-algebras, we prove the existence of an UBABS+.
In the noncommutative case, we prove that under the Proper Forcing Axiom, every C*-algebra $\A$ with an uncountable positive semibiorthogonal system of pure states has an uncountable discrete set of pure states.

\begin{definition}\label{semibiorthogonal}Let $\A$ be a C*-algebra. A positive semibiorthogonal system on $\A$ is a sequence $(a_\alpha, \tau_\alpha)_{\alpha<\kappa} \in \A\times \A^*$ such that 
\begin{enumerate}
\item $\tau_\alpha(x_\alpha) = 1$
\item $\tau_\alpha(a_\beta)= 0$ for $\beta<\alpha$,
\item $\tau_\alpha(a_\beta)\geq 0$ for all $\alpha\neq \beta$ and
\item $\tau_\alpha$ is a positive linear functional
\end{enumerate}
for every $\alpha, \beta <\kappa$. In the case where $\tau_\alpha$ is a pure state for each $\alpha<\kappa$, we say that $(a_\alpha, \tau_\alpha)_{\alpha<\kappa}$ is a positive semibiorthogonal system of pure states.
\end{definition}

The following partial order will be used to apply the Martin's Axiom:

\begin{definition}\label{def_part}Let $\A$ be a C*-algebra. Consider $(a_\alpha, \tau_\alpha)_{\alpha<\kappa} \in \A\times \A^*$ a positive semibiorthogonal system in $\A$.

Define the partial order $\mathbb{P}$ formed by elements $p\in [\omega_1]^{<\omega}$ and where the order is given by $p\leq q$: 
\begin{enumerate}
\item $q\subseteq p$;
\item For all $\beta \in q$ and for all $\alpha \in p\setminus q$ we have that $\tau_\alpha(a_\beta) <\frac{1}{2}$.
\end{enumerate}
\end{definition}

\begin{lemma}\label{non_ccc}Let $\A$ be a C*-algebra. Consider $(a_\alpha, \tau_\alpha)_{\alpha<\kappa} \in \A\times \A^*$ a positive semibiorthogonal system in $\A$ and $\mathbb{P}$ the partial order of Definition \ref{def_part}. Suppose $\mathbb{P}$ does not have the countable chain condition (c.c.c). Then 

\begin{enumerate}
\item There is an uncountable discrete set in $S(\A)$.
\item If $\A = C(K)$ and $\tau_\alpha$ is a pure state for each $\alpha<\omega_1$, then $\A$ has an UBABS+.
\end{enumerate}

\end{lemma}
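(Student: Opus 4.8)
The plan is to convert the failure of the countable chain condition into an explicit order-theoretic configuration of states, and then to upgrade that configuration to a genuinely discrete family.

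First, since $\PP$ is not ccc, fix an uncountable antichain $\{p_\xi : \xi<\omega_1\}$. I would begin by unwinding compatibility in $\PP$: two conditions $p,q$ are compatible precisely when $p\cup q$ is again a condition, so $p_\xi \perp p_\zeta$ exactly when some ``large value'' $\tau_\alpha(a_\beta)\geq \tfrac12$ occurs with $\beta$ and $\alpha$ on opposite sides of the symmetric difference. Applying the $\Delta$-system lemma I obtain a root $R$ and pairwise disjoint leaves $s_\xi$ of constant size $n$, and I refine to a block sequence ($\max s_\xi < \min s_\zeta$ for $\xi<\zeta$). The semibiorthogonality clause $\tau_\alpha(a_\beta)=0$ for $\beta<\alpha$ is what makes this productive: for a fixed $\beta$ the inequality $\tau_\alpha(a_\beta)\geq\tfrac12$ forces $\alpha\leq\beta$, so only countably many $\alpha$ witness it; as $R$ is finite, I can discard countably many $\xi$ and assume $\tau_\alpha(a_\beta)<\tfrac12$ for every $\beta\in R$ and every $\alpha$ in a leaf. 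After this neutralization of the root, incompatibility of $p_\xi,p_\zeta$ (with $\xi<\zeta$) must be witnessed inside the leaves, and since the witnessing value is positive the pair has state-index below element-index. Hence for all $\xi<\zeta$ there are $\alpha\in s_\xi$, $\beta\in s_\zeta$ with $\tau_\alpha(a_\beta)\geq\tfrac12$.

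To avoid a partition argument on pairs (which $\omega_1\not\to(\omega_1)^2$ forbids), I would pass to the averaged data $\rho_\xi=\frac1n\sum_{\gamma\in s_\xi}\tau_\gamma$ and $c_\xi=\frac1n\sum_{\gamma\in s_\xi}a_\gamma\in\A_{+,1}$. Positivity of the states together with the block structure then yields, for the resulting family, $\rho_\xi(c_\zeta)=0$ when $\zeta<\xi$, $\rho_\xi(c_\xi)\geq\tfrac1n$, and $\rho_\xi(c_\zeta)\geq \tfrac{1}{2n^2}>0$ when $\xi<\zeta$. In particular the weak$^*$-open sets $\{\sigma:\sigma(c_\xi)>0\}$ exhibit $\{\rho_\xi:\xi<\omega_1\}$ as an uncountable \emph{right-separated} family in $S(\A)$: each such set contains $\rho_\xi$ and misses every later $\rho_\zeta$.

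The remaining, and main, step is to promote right-separation to genuine weak$^*$-discreteness, and this is where the real difficulty lies: the functionals $c_\xi$ separate each $\rho_\xi$ from all \emph{later} members but give no control from below, and the abstract data alone (a right-separated $\omega_1$-sequence) is consistent with countable spread. I would therefore construct, by transfinite recursion using weak$^*$-compactness of $S(\A)$, a \emph{free} subsequence $\langle \rho_{\eta_\nu}\rangle$ (one for which the closures of initial and final segments are disjoint); a free sequence is automatically discrete, and it combines with the right-separation already in hand. The delicate point is exactly the separation of the closure of an initial segment from its tail (the ``left'' separations), which the $c_\xi$ do not supply; here the ambient set-theoretic hypothesis of the surrounding theorem is used, guaranteeing that an uncountable right-separated subspace of the compact space $S(\A)$ contains an uncountable discrete subspace (equivalently, that the recursion cannot stall inside a separable piece). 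Finally, for (2) one specializes to $\A=C(K)$ with $\tau_\alpha=\delta_{x_\alpha}$: the discrete set of pure states is a discrete set of points $\{x_\alpha\}\subseteq K$, and Urysohn's lemma produces $f_\alpha\in C(K)$ with $0\leq f_\alpha\leq 1$, $f_\alpha(x_\alpha)=1$, and $f_\alpha$ vanishing at the other $x_\beta$, so that $(f_\alpha,\delta_{x_\alpha})_{\alpha<\omega_1}$ is a UBABS+ of type $\eta=0$, exactly as in the commutative computation of the introduction.
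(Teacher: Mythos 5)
Your opening (the uncountable antichain, the $\Delta$-system with block-ordered leaves of constant size $n$, and the extraction of witnesses $\alpha\in s_\xi$, $\beta\in s_\zeta$ with $\tau_\alpha(a_\beta)\geq\frac12$ for $\xi<\zeta$) matches the paper. The serious problem is what you do next, in part (1). You correctly record the dividend of incompatibility, $\rho_\xi(c_\zeta)\geq\frac{1}{2n^2}$ for $\xi<\zeta$, and then discard it, asserting that the data amounts to right-separation only and that discreteness must come from ``the ambient set-theoretic hypothesis of the surrounding theorem''. This is wrong on both counts. First, that lower bound is exactly the control ``from below'' you claim is absent: setting $W_\zeta=\{\sigma:\sigma(c_\zeta)\geq\frac{1}{2n^2}\}$ (weak*-closed) and $W^0_\zeta=\{\sigma:\sigma(c_\zeta)>\frac{1}{4n^2}\}$ (weak*-open), you get $\{\rho_\xi:\xi\leq\zeta\}\subseteq W_\zeta\subseteq W^0_\zeta$ while $\rho_\xi\notin W^0_\zeta$ for all $\xi>\zeta$; hence $\overline{\{\rho_\xi:\xi\leq\zeta\}}\cap\overline{\{\rho_\xi:\xi>\zeta\}}=\emptyset$ for every $\zeta$, and consequently each $\rho_{\eta+1}$ lies outside both $\overline{\{\rho_\xi:\xi\leq\eta\}}$ and $\overline{\{\rho_\xi:\xi>\eta+1\}}$, so $\{\rho_{\eta+1}:\eta<\omega_1\}$ is an uncountable discrete set --- in ZFC. (This is the paper's free-sequence argument; any worry about limit indices is settled by passing to successor indices as above.) Second, Lemma \ref{non_ccc} carries no set-theoretic hypothesis: it is the ZFC half of Theorem \ref{mateorema}, where MA enters only in the ccc case via a generic filter. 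The principle you invoke instead --- every uncountable right-separated subspace of the compact space $S(\A)$ contains an uncountable discrete subspace --- is the nonexistence of compact S-spaces, which is independent of ZFC (it fails under $\Diamond$, and it requires Szentmikl\'ossy's theorem under MA$_{\omega_1}$, or Todorcevic's no-S-spaces theorem under PFA, which the paper reserves for Theorem \ref{pfateorema}). So your part (1) overlooks the step that makes the lemma true outright and replaces it with a principle unavailable at this point of the paper's architecture.

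Part (2) has a second, independent gap: the discrete family produced in (1) consists of the averages $\rho_\xi=\frac1n\sum_{\gamma\in s_\xi}\delta_{x_\gamma}$, which are not pure states and are not point evaluations unless $n=1$; so ``the discrete set of pure states is a discrete set of points $\{x_\alpha\}\subseteq K$'' is unjustified, and Urysohn's lemma cannot be applied as you apply it. What the configuration actually yields is a discrete (free) sequence of $n$-tuples $(x_{\alpha_1(\eta)},\dots,x_{\alpha_n(\eta)})$ in $K^n$, and an uncountable discrete subset of $K^n$ does not in general give an uncountable discrete subset of $K$. This is precisely why the paper works in $K^n$ and then cites Proposition 3.5 of \cite{somepiotr11}, which converts an uncountable discrete subset of a finite power $K^n$ into a UBABS+ of type $1-\frac1m$ for some $m\leq n$ (not of type $0$). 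That proposition, or a proof of it, is the ingredient your argument is missing; the paper even remarks that its reliance on Urysohn's lemma is exactly what obstructs a noncommutative analogue.
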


\begin{proof}
Let $(p_\xi)_{\xi<\omega_1}$ be an uncountable antichain in $\mathbb{P}$. Suppose $(p_\xi)_{\xi<\omega_1}$ is a $\Delta$-system with root $\Delta$ and such that:
\begin{itemize}
\item $\Delta<p_\xi\setminus \Delta < p_\eta\setminus \Delta$ for each $\xi<\eta<\omega_1$;
\item $|p_\xi\setminus \Delta|=n$,
\item $p_\xi\setminus \Delta = (\alpha_1(\xi), \cdots, \alpha_n(\xi))$.
\end{itemize}
For each $\xi<\eta$ define $p_{\xi,\eta} = p_{\xi}\cup p_{\eta}$. Since $(p_\xi)_{\xi<\omega_1}$ is an antichain, for each $\xi<\eta$, $p_{\xi,\eta}\not \leq p_\xi,p_\eta$.
Since $p_{\xi,\eta}\leq p_\xi$, it follows that $p_{\xi,\eta}\not \leq p_\eta$. In particular, there is $\beta \in p_\eta$ and $\alpha \in p_{\xi, \eta}\setminus p_\eta = p_{\xi}\setminus \Delta$ such that $\tau_{\alpha}(a_\beta)\geq \frac{1}{2}$ (observe that we should have $\beta\in p_\eta\setminus \Delta.$)
\begin{enumerate}
\item 
Consider $F_\eta = \frac{1}{n}\sum_{\alpha \in p_\eta\setminus \Delta} \tau_\alpha$ and $b_\eta = \sum_{\alpha \in p_\eta\setminus \Delta}a_\alpha$ and
$$W_\eta = \{\tau: \tau(b_\eta)\geq \frac{1}{2n}\}$$
$$W_{\eta}^0 = \{\tau: \tau(b_\eta)> \frac{1}{4n}\}$$
Then $W_\eta\subseteq W_\eta^0$, with $W_\eta$ closed and $W_\eta^0$ open set in $S(\A)$.

If $\xi\leq \eta$, then $F_\xi(b_\eta)\geq\frac{1}{2n}$, in particular, $F_\xi \in W_{\eta}$.

If $\xi>\eta$, then $F_\xi(b_\eta)=0 <\frac{1}{4n}$, in particular, $F_\xi \notin W_{\eta}^0$.

This shows that $(F_\xi)_{\xi<\omega_1}$ is a free sequence\footnote{See \cite{handbookconjuntos} for definitions and results about cardinal functions.} in $S(\A)$ i.e., 
$$\overline{\{F_\xi: \xi\leq\eta\}}\cap \overline{\{F_\xi: \xi>\eta\}} = \emptyset$$
for each $\alpha<\omega_1$. In particular, a discrete set in $S(A)$ and this proves (1).

\item 
Suppose now that $\A = C(K)$ and $\tau_\alpha = \delta_{x_\alpha}$ is a pure state for each $\alpha<\omega_1$. 
Then for every $\xi<\eta$, there are $i,j =1, \cdots, n$ such that 
$$\tau_{\alpha_j(\xi)} (a_{\alpha_i(\eta)}) = \delta_{x_{\alpha_j(\xi)}} (a_{\alpha_i(\eta)}) = a_{\alpha_i(\eta)}(x_{\alpha_j(\xi)})\geq\frac{1}{2}.$$
Consider for each $\eta<\omega_1$:

$$W_\eta = \{(x_1, \cdots, x_n)\in K^n: \exists i,j(a_{\alpha_i(\eta)}(x_j)\geq \frac{1}{2})\}$$ 
and
$$W_\eta^0 = \{(x_1, \cdots, x_n)\in K^n: \exists i,j(a_{\alpha_i(\eta)}(x_j)> \frac{1}{4})\}$$ 

Then $W_\eta\subseteq W_\eta^0$, with $W_\eta$ closed and $W_\eta^0$ open set in $K^n$.

For each $\alpha<\omega_1$, define $a_\eta = (x_{\alpha_1(\eta)}, \cdots, x_{\alpha_n(\eta)})\in K^n$. Then 

If $\xi\leq\eta$, then there is $i,j = 1, \cdots, n$ such that $a_{\alpha_i(\eta)}(x_{\alpha_j(\xi)})\geq\frac{1}{2}$. In particular, $a_\xi \in W_\eta$;

If $\xi> \eta$, then $a_{\alpha_i(\eta)}(x_{\alpha_j(\xi)})=0<\frac{1}{4}$ for every $i,j = 1, \cdots, n$ and therefore, $a_\xi \not\in W_\eta^0$

This shows that $(a_\eta)_{\eta<\omega_1}$ is a free sequence in $K^n$. In particular, $K^n$ has an uncountable discrete set. By Proposition 3.5 of \cite{somepiotr11}, it follows that $C(K)$ has an UBABS+ of type $1 - \frac{1}{m}$ for some $m\leq n$.

\end{enumerate}

\end{proof}
It would be interesting to get the conclusion 2) of Lemma \ref{non_ccc} for noncommutative C*-algebras. For this, it would be sufficient to prove that every C*-algebra $\A$ with an uncountable discrete set in $P(\A)^n$ for some $n\in \mathbb{N}$,  admits an UBABS+ (i.e., a noncommutative version of Proposition 3.5 of \cite{somepiotr11}). The proof of Proposition 3.5 of \cite{somepiotr11} relies on the Urysohn lemma, and we have not been able to adapt this proof using the noncommutative analogues of the Urysohn Lemma.

When $\mathbb{P}$ is c.c.c, we can apply the Martin's Axiom to get the following result\footnote{The proof follows the line of the Theorem 11 \cite{quocientstevo}.}:
\begin{theorem}\label{mateorema}Assume MA.
Let $\A$ be a C*-algebra and suppose $\A$ has an uncountable positive semibiorthogonal system of pure states. Then $S(A)$ has an uncountable discrete set. In case where $\A=C(K)$ and $\tau_\alpha$ is a pure state for each $\alpha<\omega_1$, then $\A$ has an UBABS+.

\end{theorem}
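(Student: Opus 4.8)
The plan is to follow the dichotomy behind Todorcevic's Theorem 11, using the poset $\mathbb{P}$ of Definition \ref{def_part} attached to the given positive semibiorthogonal system of pure states. First I would split into two cases according to whether $\mathbb{P}$ satisfies the countable chain condition. If $\mathbb{P}$ is \emph{not} ccc, then Lemma \ref{non_ccc} applies verbatim and already produces an uncountable discrete set in $S(\A)$ (and, when $\A=C(K)$, an UBABS+); so the only purpose of invoking MA is to settle the ccc case. Hence from now on I assume $\mathbb{P}$ is ccc.

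Next I would carry out the MA application. For each $\gamma<\omega_1$ set $D_\gamma=\{p\in\mathbb{P}: p\not\subseteq\gamma\}$. Each $D_\gamma$ is dense: given a condition $p$, pick an ordinal $\delta>\max(\gamma,\max p)$; since $\delta$ exceeds every index already in $p$, positive semibiorthogonality (clause (2) of Definition \ref{semibiorthogonal}) gives $\tau_\delta(a_\beta)=0<\frac{1}{2}$ for all $\beta\in p$, so $p\cup\{\delta\}\leq p$ is a legitimate strengthening lying in $D_\gamma$. As $\mathbb{P}$ is ccc and (in the only nontrivial instance of MA, where $\neg$CH holds) $\omega_1<\mathfrak{c}$, Martin's Axiom produces a filter $G$ meeting every $D_\gamma$. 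Put $X=\bigcup G\subseteq\omega_1$; meeting the $D_\gamma$'s forces $X$ to be cofinal, hence uncountable, and I enumerate the associated states in increasing order of index as $(\tau_{\gamma_\xi})_{\xi<\omega_1}$.

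Then I would show that $(\tau_{\gamma_\xi})_{\xi<\omega_1}$ is a free sequence in $S(\A)$, which by the closure computation of Lemma \ref{non_ccc} yields the desired uncountable discrete set. One half of the separation is automatic: for $\xi<\eta$ semibiorthogonality gives $\tau_{\gamma_\eta}(a_{\gamma_\xi})=0$, so the excision neighbourhood $U_{a_{\gamma_\xi},1/2}=\{\sigma:\sigma(a_{\gamma_\xi})>\frac{1}{2}\}$ from Proposition \ref{openneighbourhood} contains $\tau_{\gamma_\xi}$ while missing every later state, giving right-separation for free. The other half is exactly what the order of $\mathbb{P}$ is designed to supply: clause (2) of Definition \ref{def_part} constrains a newly introduced index against the smaller ones already present, i.e.\ it controls precisely the evaluations $\tau_{\gamma_\xi}(a_{\gamma_\eta})$ with $\xi<\eta$, and a genericity argument on $G$ is meant to promote these finitary constraints into the statement that no $\tau_{\gamma_\eta}$ lies in the weak$^*$-closure of its predecessors $\{\tau_{\gamma_\xi}:\xi<\eta\}$. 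Combining left- and right-separation gives $\overline{\{\tau_{\gamma_\xi}:\xi\leq\eta\}}\cap\overline{\{\tau_{\gamma_\xi}:\xi>\eta\}}=\emptyset$ for all $\eta$, a free and hence discrete sequence. For the commutative refinement I would specialise to $\A=C(K)$ with $\tau_\alpha=\delta_{x_\alpha}$: the discrete set of point-mass states corresponds to an uncountable discrete set in $K$ (indeed in $K^n$), and Proposition 3.5 of \cite{somepiotr11}, via Urysohn's Lemma, upgrades it to an UBABS+ of type $1-\frac{1}{m}$, exactly as in part (2) of Lemma \ref{non_ccc}.

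I expect the main obstacle to be the left-separation step of the previous paragraph, namely controlling the weak$^*$-closures of the \emph{uncountable} initial segments. Right-separation from the tail is immediate from positive semibiorthogonality, but the new-on-old clause of $\mathbb{P}$ only pins down finitely many pairs at a time, and a generic union may well contain pairs $\xi<\eta$ with $\tau_{\gamma_\xi}(a_{\gamma_\eta})$ large; ruling out that such pairs accumulate to place $\tau_{\gamma_\eta}$ in the closure of its predecessors is where the ccc-ness of $\mathbb{P}$ and the genericity of $G$ must be used decisively. This closure-control is the technical heart of the proof and is precisely the point at which the argument must follow the line of Theorem 11 of \cite{quocientstevo} most closely.
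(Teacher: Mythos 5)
Your skeleton coincides with the paper's: the same poset $\mathbb{P}$ from Definition \ref{def_part}, the same dichotomy on the countable chain condition with Lemma \ref{non_ccc} disposing of the non-ccc branch, essentially the same dense sets $D_\gamma$, and the same invocation of MA to obtain a filter $G$ whose union $A=\bigcup G$ is uncountable. But the proposal stops exactly where the proof has to do its real work. You yourself flag the ``closure-control'' step as an unresolved obstacle, and it is a genuine gap, not a routine verification: nothing in your text rules out that for each $\eta$ there are many $\xi<\eta$ with $\tau_{\gamma_\xi}(a_{\gamma_\eta})$ close to $1$, and without controlling those pairs you do not even get discreteness --- your neighbourhoods $\{\sigma:\sigma(a_{\gamma_\xi})>\tfrac{1}{2}\}$ exclude only the \emph{later} states, not the earlier ones --- let alone the free sequence you aim for. (Incidentally, the free-sequence formulation is stronger than needed; pairwise estimates suffice for a discrete set, and no excision is needed to know that $\{\sigma:\sigma(a)>\tfrac{1}{2}\}$ is weak*-open.)

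The paper closes this gap with two short moves that are absent from your proposal. First, filter compatibility (the paper's Claim 2): fix for each $\beta\in A$ a condition $p_\beta\in G$ with $\beta\in p_\beta$; if $\alpha,\beta\in A$, $\alpha\neq\beta$ and $\alpha\notin p_\beta\cap\beta$, choose $q\in G$ with $q\leq p_\alpha,p_\beta$. Then either $\alpha>\beta$, in which case $\tau_\alpha(a_\beta)=0$ by clause (2) of Definition \ref{semibiorthogonal}, or $\alpha\in q\setminus p_\beta$ (note $q\leq p_\alpha$ means $p_\alpha\subseteq q$, so $\alpha\in q$), in which case the order of $\mathbb{P}$ forces $\tau_\alpha(a_\beta)<\tfrac{1}{2}$. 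Hence the ``bad pairs'' you worry about can occur only when $\alpha\in p_\beta\cap\beta$, i.e.\ at most finitely many per $\beta$. Second, a pressing-down argument: by Fodor's lemma (formally, press down $\beta\mapsto\sup(p_\beta\cap\beta)$ and then count finite subsets of a countable set) there is an uncountable $B\subseteq A$ on which $p_\beta\cap\beta$ is one fixed finite set $F$. Then $(a_\alpha,\tau_\alpha)_{\alpha\in B\setminus F}$ satisfies $\tau_\alpha(a_\alpha)=1$ and $\tau_\alpha(a_\beta)<\tfrac{1}{2}$ for all $\alpha\neq\beta$ in $B\setminus F$, i.e.\ it is an UBABS+ of type $\tfrac{1}{2}$, and the sets $\{\sigma:\sigma(a_\alpha)>\tfrac{1}{2}\}$ isolate the points, giving the uncountable discrete set. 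Note also that this ccc-case argument produces an UBABS+ for \emph{arbitrary} C*-algebras, so your appeal to Proposition 3.5 of \cite{somepiotr11} is needed only inside the non-ccc branch, exactly as in part (2) of Lemma \ref{non_ccc}.
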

\begin{proof}
Let $(a_\alpha, \tau_\alpha)_{\alpha<\kappa} \in \A\times \A^*$ be an uncountable positive semibiorthogonal system of pure states.
Consider $\mathbb{P}$ the partial order of Definition \ref{def_part}. 
If $\mathbb{P}$ does not have the c.c.c, then by Lemma \ref{non_ccc}, $S(\A)$ has an uncountable discrete set (and $\A$ has an UBABS+, in case where $\A=C(K)$).

Suppose now that $\mathbb{P}$ has the c.c.c.

\textbf{Claim 1 :} For every $\alpha<\omega_1$, $D_\alpha = \{p  \in \mathbb{P}: [\alpha, \omega_1)\cap p \neq \emptyset\}$ is a dense set. 

Let $G$ be a generic filter and  $A = \bigcup \{p, p\in G\}$.
For each $\beta \in A$, consider $p_\beta$ such that $\beta\in p_\beta$.
 
Suppose $(p_\alpha)_{\alpha<\omega_1}$ is a $\Delta$-system with root $\Delta$ such that
\begin{enumerate}
\item $\Delta<p_\alpha\setminus \Delta<p_\beta\setminus \Delta$ for every $\alpha<\beta$,
\end{enumerate}

\textbf{Claim 2 :} For every  $\alpha \in A\setminus (p_{\beta}\cap \beta)$, $\alpha\neq \beta$ we have that
$$\tau_\alpha(a_\beta)<\frac{1}{2}.$$

In fact, since $G$ is a filter, there is $q\in G$ such that $q\leq p_{\beta}, p_{\alpha}$. Since $q\leq p_{\beta}$, it follows that $\tau_\xi(a_\beta)< \frac{1}{2}$ for every $\xi \in q\setminus p_{\beta}$.
By hypothesis, $\alpha \in A\setminus (p_{\beta}\cap (\beta))$. Then $\alpha \notin p_{\beta}$ and therefore $\alpha \in q\setminus p_{\beta}$ (because $q\leq p_{\alpha}$) or $\alpha>\beta$. The first case gives us that $\tau_\alpha(a_\beta)< \frac{1}{2}$ and the second case $\tau_\alpha(a_\beta)=0$. In any case, we have that $\tau_\alpha(a_\beta)< \frac{1}{2}$.

By Fodor's lemma, consider an uncountable set $B\subseteq A$ such that $p_{\beta}\cap \beta$ is constantly $F\subseteq A$ for all $\beta \in B$. Then for every $\alpha, \beta \in B\setminus F$ we have that
$$\tau_\alpha(a_\beta)< \frac{1}{2}.$$
This shows us that $(\tau_\alpha, a_\alpha)_{\alpha \in B\setminus F}$ is an UBABS+  on\footnote{In particular, $(\tau_\alpha)_{\alpha \in B\setminus F}$ is an uncountable discrete set in $S(A)$.} $\A$.

\end{proof}

\begin{proposition}Assume MA. Let $\A = C(K)$ be a commutative C*-algebra such that $\A$ has an uncountable positive semibiorthogonal system of pure states. Then $\A$ has an uncountable almost irredundant set.
\end{proposition}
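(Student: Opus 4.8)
The plan is to obtain this proposition as an immediate composition of the two main results already at hand: Theorem \ref{mateorema} manufactures a UBABS+ out of the combinatorial hypothesis, and Theorem \ref{discretalmost} converts any UBABS+ into an uncountable almost irredundant set. Since $\A = C(K)$ is commutative and the given positive semibiorthogonal system consists of pure states, every hypothesis of Theorem \ref{mateorema} is in force, so the strategy is essentially to chain the two theorems and check that the hypotheses line up correctly.

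First I would invoke Theorem \ref{mateorema}. Under MA, its second conclusion --- the case $\A = C(K)$ with each $\tau_\alpha$ a pure state --- yields that $\A$ has a UBABS+, that is, an uncountable sequence $(b_\alpha,\sigma_\alpha)_{\alpha<\omega_1}$ with $b_\alpha \in \A_{+,1}$, each $\sigma_\alpha$ a state, $\sigma_\alpha(b_\alpha)=1$ and $|\sigma_\alpha(b_\beta)|\le \eta$ for $\alpha\neq\beta$, where $0\le\eta<1$ is fixed. This is where the genuine content resides: one analyses the poset $\mathbb{P}$ of Definition \ref{def_part}; if $\mathbb{P}$ fails the c.c.c.\ one uses the free-sequence construction of Lemma \ref{non_ccc}(2), which in the commutative case passes through Proposition 3.5 of \cite{somepiotr11} to upgrade a discrete set in $K^n$ to a UBABS+, while if $\mathbb{P}$ is c.c.c.\ one runs the MA genericity argument with the $\Delta$-system lemma and Fodor's Lemma. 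All of this is already packaged inside Theorem \ref{mateorema}, so at this stage I would simply cite it.

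Next I would feed the resulting UBABS+ of type $\eta$ into Theorem \ref{discretalmost}, which asserts that $\mathcal{X} = \{b_\alpha : \alpha<\omega_1\}$ is an almost irredundant set. It remains only to note that $\mathcal{X}$ is genuinely uncountable: if $b_\alpha = b_\beta$ for some $\alpha\neq\beta$, then $1 = \sigma_\alpha(b_\alpha) = \sigma_\alpha(b_\beta) \le \eta < 1$, a contradiction, so the $b_\alpha$ are pairwise distinct and $\mathcal{X}$ has cardinality $\aleph_1$. This completes the argument.

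I do not anticipate any real obstacle, precisely because the difficulty has been absorbed into the preceding theorems. The single point demanding attention is the matching of hypotheses: one must check that the commutativity of $\A$ together with the purity of the functionals is exactly what allows Theorem \ref{mateorema} to output a full UBABS+ --- rather than merely an uncountable discrete set of states, which is all that is guaranteed in the general noncommutative case --- so that Theorem \ref{discretalmost} becomes applicable and delivers the almost irredundant set.
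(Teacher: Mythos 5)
Your proposal is correct and is exactly the paper's own proof: the paper derives the proposition by citing Theorem \ref{mateorema} (whose commutative case yields a UBABS+ under MA) followed by Theorem \ref{discretalmost}. Your extra check that the elements $b_\alpha$ are pairwise distinct (via $\sigma_\alpha(b_\alpha)=1$ versus $|\sigma_\alpha(b_\beta)|\le\eta<1$) is a small but welcome addition that the paper leaves implicit.
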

\begin{proof}
It follows from Theorem \ref{mateorema} and Theorem \ref{discretalmost}.
\end{proof}

In the case of noncommutative C*-algebras, Theorem \ref{mateorema} can only guarantee the existence of an uncontable discrete set of states. By Proposition \ref{discrete-positive}, in order to guarantee the existence of an UBABS+ (and therefore, an uncountable almost irredundant set), we need a discrete set of states such that every state in this set can be excised. As proved by Akemann, Anderson and Pedersen \cite{akemann1986excising}, a state can be excised if and only if it belongs to the weak*-closure of the pure states $P(\A)$.

Examples of C*-algebras in which every state can be excised included all C*-algebras which acts irreducible on a Hilbert space $H$ and which contains no non-zero compact operators (Theorem 2 of \cite{glimm1960stone}, see also \cite{brown2005excision}).

To guarantee the existence of an an UBABS+, it would be interesting to obtain an uncountable discrete set of pure states. To this end, we employ a stronger version of Martin's Axiom, namely the Proper Forcing Axiom (PFA). In this article, we will use the following consequence of PFA: There is no S-spaces, i.e., every regular topological space which has an uncountable right-separated sequence has an uncountable left-separated sequence (See Theorem 8.9 of \cite{todorcevic1989partition}). For the statement and other consequences of the Proper Forcing Axiom, see \cite{todorcevic1989partition}.

\begin{theorem}[Proposition 2.5 of \cite{hida2018large}]\label{pfateorema}
Assume PFA. Let $\mathcal{A}$ be a C*-algebra. Suppose $(a_\alpha, \tau_\alpha)_{\alpha<\omega_1}$ is an uncountable positive semibiorthogonal system of pure states. Then there is an uncountable $\Gamma\subset\omega_1$  such that $(\tau_\alpha)_{\alpha \in \Gamma}$ is a discrete set of pure states.
\end{theorem}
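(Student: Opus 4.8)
The plan is to recognize the semibiorthogonal system as a right‑separated sequence in the space of pure states and then invoke the absence of S‑spaces. Concretely, for each $\alpha<\omega_1$ I would set
$$U_\alpha=\{\sigma\in P(\A):|\sigma(a_\alpha)-1|<\tfrac12\},$$
a weak*-open subset of $P(\A)$ since $\sigma\mapsto\sigma(a_\alpha)$ is weak*-continuous. By conditions (1) and (2) of Definition \ref{semibiorthogonal} we have $\tau_\alpha(a_\alpha)=1$ and $\tau_\beta(a_\alpha)=0$ whenever $\alpha<\beta$, so $\tau_\alpha\in U_\alpha$ while $\tau_\beta\notin U_\alpha$ for every $\beta>\alpha$. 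Hence $(\tau_\alpha)_{\alpha<\omega_1}$ is an uncountable right‑separated sequence (and in particular the $\tau_\alpha$ are pairwise distinct, since $\tau_\beta(a_\alpha)=0\neq1=\tau_\alpha(a_\alpha)$). As $P(\A)$ is a subspace of $\A^*$ with the weak*-topology, it is a subspace of a Hausdorff locally convex space, hence regular. Thus $Y=\{\tau_\alpha:\alpha<\omega_1\}$ is a regular space carrying an uncountable right‑separated sequence, and the PFA consequence that there are no S‑spaces (Theorem 8.9 of \cite{todorcevic1989partition}) yields an uncountable left‑separated sequence inside $Y$.

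The next step is to upgrade the combination of right‑ and left‑separation to genuine discreteness. The left‑separated sequence just obtained consists of points of $Y$, so it has the form $Z=\{\tau_\alpha:\alpha\in A_0\}$ for some uncountable $A_0\subseteq\omega_1$, left‑separated with respect to some well‑ordering $\prec$ which, passing to an initial segment, we may take of order type $\omega_1$; simultaneously $Z$ is still right‑separated with respect to the restriction of the original $\omega_1$-ordering, because right‑separation is hereditary. Enumerating $Z$ along $\prec$ as $(\sigma_\xi)_{\xi<\omega_1}$ with $\sigma_\xi=\tau_{h(\xi)}$ defines an injection $h:\omega_1\to\omega_1$, and what I need is an uncountable set on which $h$ is strictly increasing, for there the two orderings coincide. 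I would colour a pair $\{\xi,\zeta\}$ with $\xi<\zeta$ red if $h(\xi)<h(\zeta)$ and blue otherwise: a blue‑homogeneous set of type $\omega+1$ would furnish a strictly descending $\omega+1$-sequence of ordinals, which is impossible, so the Dushnik–Miller relation $\omega_1\to(\omega_1,\omega+1)^2$ forces an uncountable red‑homogeneous set $B$, on which $h|_B$ is increasing.

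Finally, I would take $\Gamma=h[B]$. For $\gamma=h(\xi)\in\Gamma$, the right‑separation neighbourhood $U_\gamma$ omits every $\tau_\delta$ with $\delta>\gamma$, while the left‑separation neighbourhood of $\sigma_\xi$ omits every $\sigma_\zeta=\tau_{h(\zeta)}$ with $\zeta<\xi$, that is (since $h|_B$ is increasing) every $\tau_\delta$ with $\delta\in\Gamma$ and $\delta<\gamma$. The intersection of these two weak*-neighbourhoods meets $\{\tau_\delta:\delta\in\Gamma\}$ only in $\tau_\gamma$, so $(\tau_\alpha)_{\alpha\in\Gamma}$ is an uncountable discrete set of pure states, as required.

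I expect the genuine difficulty to lie not in the soft application of the no‑S‑space principle but in the reconciliation of the two well‑orderings in the middle step: the no‑S‑space theorem returns left‑separation with respect to an ordering having no a priori relation to the ordering that witnesses right‑separation, and discreteness emerges only once the two are forced to agree on an uncountable set. The well‑foundedness of the ordinals, exploited through the Dushnik–Miller partition relation, is precisely what eliminates the obstructive order‑reversing alternative and makes this matching possible.
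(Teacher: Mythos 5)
Your proof is correct and follows essentially the same route as the paper's: the semibiorthogonal system makes $(\tau_\alpha)_{\alpha<\omega_1}$ a right-separated sequence in the regular space $P(\mathcal{A})$, the no-S-spaces consequence of PFA produces an uncountable left-separated subsequence, and a sequence that is both right- and left-separated contains an uncountable discrete subset. The only difference is that the paper invokes this last fact as standard (``a right and left separated sequence is in particular a discrete set''), while you prove it explicitly by reconciling the two well-orderings on an uncountable set via the partition relation $\omega_1\to(\omega_1,\omega+1)^2$ --- a correct and worthwhile filling-in of the one step the paper leaves implicit.
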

\begin{proof}
Since  $(a_\alpha, \tau_\alpha)_{\alpha<\omega_1}$ is an uncountable positive semibiorthogonal system, we have that $Y = (\tau_\alpha)_{\alpha<\omega_1}$ is a right-separated sequence in $P(\A)$. By PFA, $Y$ has an uncountable left-separated sequence. Since a right and left separated sequence is in particular a discrete set, we are done.
\end{proof}

\section{Almost irredundant sets in scattered C*-algebras}\label{scatteredsection}
In this section we prove that assuming the Proper Forcing Axiom, every nonseparable scattered C*-algebra admits an uncountable almost irredundant set. By Theorem \ref{pfateorema} and Theorem \ref{discretalmost}, it is enough to construct an uncountable positive semibiorthogonal system of pure states.

If $K$ is a compact Hausdorff space and there is an uncountable strictly increasing sequence of open sets, then $C(K)$ has an uncountable positive semibiorthogonal system of pure states (Theorem 2 of \cite{lazar1981points}). 
The following lemma can be thought as the noncommutative version of this fact:

\begin{lemma}\label{existence_family_positive}Let $\A$ be a C*-algebra and suppose $(I_\alpha)_{\alpha<\omega_1}$ is an strictly increasing sequence of ideals. Then $\A$ has an uncountable positive semibiorthogonal system of pure states.
\end{lemma}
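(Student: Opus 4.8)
The plan is to build the system directly from the ideals, exploiting that each strict inclusion $I_\alpha \subsetneq I_{\alpha+1}$ supplies a pure state separating $I_\alpha$ from $I_{\alpha+1}$. Concretely, for each $\alpha<\omega_1$ I will produce a pure state $\tau_\alpha$ of $\A$ with $\tau_\alpha|_{I_\alpha}=0$ but $\tau_\alpha|_{I_{\alpha+1}}\neq 0$, together with a positive element $a_\alpha\in I_{\alpha+1}$ normalized so that $\tau_\alpha(a_\alpha)=1$. The family $(a_\alpha,\tau_\alpha)_{\alpha<\omega_1}$ will then be checked against the four conditions of Definition \ref{semibiorthogonal}.

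To construct $\tau_\alpha$, note that since the sequence is strictly increasing, $I_\alpha$ is a proper closed ideal of $I_{\alpha+1}$, so the quotient $I_{\alpha+1}/I_\alpha$ is a nonzero C*-algebra and therefore admits a pure state $\phi_\alpha$. Composing with the quotient map $I_{\alpha+1}\to I_{\alpha+1}/I_\alpha$ yields a pure state $\rho_\alpha$ of $I_{\alpha+1}$ that vanishes on $I_\alpha$ (purity is preserved because the pullback of the irreducible GNS representation of $\phi_\alpha$ along a surjection is again irreducible). By the standard extension theorem for pure states of a closed ideal \cite{dixmier1977c}, $\rho_\alpha$ extends uniquely to a pure state $\tau_\alpha$ of $\A$ with $\tau_\alpha|_{I_{\alpha+1}}=\rho_\alpha$; in particular $\tau_\alpha|_{I_\alpha}=\rho_\alpha|_{I_\alpha}=0$ while $\tau_\alpha|_{I_{\alpha+1}}=\rho_\alpha\neq 0$.

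For $a_\alpha$, since $\rho_\alpha$ is a norm-one state of $I_{\alpha+1}$, I choose a positive $b\in I_{\alpha+1}$ with $\|b\|\le 1$ and $\rho_\alpha(b)>1/2$, and set $a_\alpha=b/\rho_\alpha(b)\in I_{\alpha+1}$; this is positive, uniformly bounded in norm, and satisfies $\tau_\alpha(a_\alpha)=\rho_\alpha(a_\alpha)=1$, giving condition (1). Condition (4) holds by construction, and condition (3), that $\tau_\alpha(a_\beta)\ge 0$ for $\alpha\neq\beta$, is immediate since $\tau_\alpha$ is positive and each $a_\beta\ge 0$. For condition (2), if $\beta<\alpha$ then $\beta+1\le\alpha$, whence $a_\beta\in I_{\beta+1}\subseteq I_\alpha$, and so $\tau_\alpha(a_\beta)=0$ because $\tau_\alpha$ vanishes on $I_\alpha$.

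The only genuinely non-elementary input is the unique extension of a pure state of a closed ideal to a pure state of the ambient algebra (equivalently, the extension of the associated irreducible representation of $I_{\alpha+1}$ to an irreducible representation of $\A$); the remaining verifications are direct. The one point requiring care is that a state need not attain its norm on the unit ball when $I_{\alpha+1}$ is non-unital, which is why I normalize an approximately norming element rather than demanding $\rho_\alpha(a_\alpha)=1$ outright — this keeps $\tau_\alpha(a_\alpha)=1$ exact while preserving a uniform norm bound on the $a_\alpha$.
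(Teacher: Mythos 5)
Your proof is correct and takes essentially the same route as the paper: both arguments hinge on producing, for each $\alpha<\omega_1$, a pure state $\tau_\alpha$ of $\A$ that annihilates $I_\alpha$ but not $I_{\alpha+1}$, pairing it with a positive $a_\alpha\in I_{\alpha+1}$ with $\tau_\alpha(a_\alpha)=1$, and then observing that condition (2) of Definition \ref{semibiorthogonal} follows from $a_\beta\in I_{\beta+1}\subseteq I_\alpha$ for $\beta<\alpha$. The only difference is in the supporting machinery -- the paper obtains $\tau_\alpha$ from the strictly decreasing annihilator sets $I_\alpha^\perp$ in $P(\A)$ via Theorem 5.4.10 of Murphy, whereas you build it by hand through the quotient $I_{\alpha+1}/I_\alpha$ and the pure-state extension theorem for ideals -- and your normalization of $a_\alpha$ (scaling an approximately norming element rather than asserting exact norm attainment, which the paper leaves implicit) is a legitimate and slightly more careful variant, since Definition \ref{semibiorthogonal} imposes no norm-one requirement on the $a_\alpha$.
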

\begin{proof}
For every $\alpha<\omega_1$, define
$$I_\alpha^\perp = \{\tau\in P(\A): \forall a \in I_\alpha (\tau(a)=0)\}.$$
Then by Theorem 5.4.10 of \cite{murphy1990c}, $(I_\alpha^\perp)_{\alpha<\omega_1}$ is a strictly decreasing sequence of weak*-closed subsets of $P(\A)$. For each $\alpha<\omega_1$, consider $\tau_\alpha \in I_\alpha^\perp\setminus I_{\alpha+1}^\perp$ and a positive element $a_\alpha \in I_{\alpha+1}$ such that $\tau_\alpha(a_\alpha)=1$.
\end{proof}

It follows from Lemma \ref{existence_family_positive} that every C*-algebra with a composition series of size at least $\omega_1$ has an uncountable positive semibiorthogonal system of pure states.

\begin{theorem}\label{PFA-airr-unital}Assume PFA. Let $\A$ be a nonseparable scattered C*-algebra. Then $\A$ has an uncountable almost irredundant set of projections.
\end{theorem}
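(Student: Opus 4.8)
The plan is to funnel both the hypothesis and the earlier results of the paper into a single intermediate goal: \emph{$\A$ possesses an uncountable discrete set of pure states}. Once this is secured the conclusion is immediate, since pure states can be excised (Proposition 2.2 of \cite{akemann1986excising}) and scattered C*-algebras have real rank zero (\cite{kusuda2012c, ghasemi2018noncommutative}): Proposition \ref{discrete-positive} then produces an uncountable UBABS+ of projections $(a_\alpha,\tau_{\beta_\alpha})_{\alpha<\omega_1}$, and Theorem \ref{discretalmost} shows that $\{a_\alpha:\alpha<\omega_1\}$ is an uncountable almost irredundant set of projections. (If $\A$ is non-unital one first passes to the unitization $\tilde\A$, which remains scattered, nonseparable and of real rank zero, so that the unital results apply.) Throughout I will use the structure theorem for scattered C*-algebras: $\A$ admits a composition series $(I_\rho)_{\rho\leq\sigma}$, increasing and continuous at limits with $I_0=0$ and $I_\sigma=\A$, whose successive quotients $I_{\rho+1}/I_\rho$ are elementary, i.e. $c_0$-sums $\bigoplus_{i\in J_\rho}\K(\HH_i)$ of algebras of compact operators (\cite{jensen1978scattered, kusuda2012c, ghasemi2018noncommutative}).

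The heart of the argument, and its main obstacle, is that the ideal-chain route of Lemma \ref{existence_family_positive} does \emph{not} by itself handle every nonseparable scattered C*-algebra: the algebra $\K(\HH)$ with $\HH$ nonseparable is scattered and nonseparable yet \emph{simple}, so it carries no strictly increasing sequence of ideals whatsoever. I therefore split the proof according to a dichotomy. Since an extension of a separable C*-algebra by a separable one is separable, and a countable increasing union of separable ideals has separable closure, a transfinite induction along the composition series shows that a nonseparable $\A$ must satisfy at least one of: \textbf{(A)} $\sigma\geq\omega_1$; or \textbf{(B)} some quotient $I_{\rho+1}/I_\rho$ is nonseparable.

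In \textbf{Case (A)} the initial segment $(I_\rho)_{\rho<\omega_1}$ is a strictly increasing $\omega_1$-sequence of ideals, so Lemma \ref{existence_family_positive} yields an uncountable positive semibiorthogonal system of pure states, and Theorem \ref{pfateorema}, where PFA is invoked, extracts from it an uncountable discrete set of pure states. In \textbf{Case (B)} the nonseparable elementary quotient $Q=I_{\rho+1}/I_\rho=\bigoplus_{i\in J}\K(\HH_i)$ forces either $|J|$ to be uncountable or some $\HH_i$ to be nonseparable. In the first situation I pick one rank-one projection, with its vector state, in each of $\omega_1$ distinct summands; in the second I pick rank-one projections and vector states along an uncountable orthonormal family inside a single nonseparable $\HH_i$. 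Either way I obtain mutually orthogonal rank-one projections $p_\alpha\in Q$ and vector states $\psi_\alpha$ with $\psi_\alpha(p_\beta)=\delta_{\alpha\beta}$. Composing with the quotient map $\pi\colon\A\to Q$ gives pure states $\tau_\alpha=\psi_\alpha\circ\pi$ of $\A$, and lifting each $p_\alpha$ to a positive contraction $a_\alpha\in\A_+$ gives $\tau_\alpha(a_\beta)=\delta_{\alpha\beta}$; the weak*-open sets $\{\sigma\in S(\A):\sigma(a_\alpha)>1/2\}$ then witness that $(\tau_\alpha)_{\alpha<\omega_1}$ is an uncountable discrete set of pure states, with no appeal to PFA needed.

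In both cases $\A$ carries an uncountable discrete set of pure states, so the reduction of the first paragraph applies and delivers the almost irredundant set of projections. The points that require care, rather than genuine difficulty, are the two structural inputs on scattered C*-algebras (the elementary composition series and real rank zero), the separability bookkeeping that establishes the dichotomy, and the verification in Case (B) that the lifted family is discrete in the weak*-topology; the only place where the Proper Forcing Axiom is actually used is Case (A), through Theorem \ref{pfateorema}.
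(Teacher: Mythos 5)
Your proof is correct in its overall architecture, and that architecture coincides with the paper's own: the paper also splits into two cases according to whether the Cantor--Bendixson sequence of ideals of the scattered algebra has length $\geq\omega_1$ (your case (A)) or length $<\omega_1$, which for nonseparable $\A$ forces a nonseparable elementary subquotient (your case (B)); it treats case (A) essentially as you do, via Lemma \ref{existence_family_positive} and Theorem \ref{pfateorema} (the only use of PFA), feeding into the UBABS+ machinery, and it handles non-unital $\A$ through the unitization. Where you genuinely differ is case (B): the paper disposes of it in one line by citing Proposition 3.16 of \cite{hida2018large}, which in that situation yields an uncountable \emph{irredundant} set of projections (stronger than almost irredundant, and in ZFC alone), whereas you construct by hand a biorthogonal family of rank-one projections and vector states inside the nonseparable elementary quotient, pull it back to a weak*-discrete family of pure states of $\A$, and then run the same excision argument as in case (A). Your route is self-contained and makes explicit both that PFA enters only through case (A) and that the projection refinement requires real rank zero of scattered C*-algebras (an input the paper leaves implicit, as it never mentions Proposition \ref{discrete-positive} or real rank zero in its proof); the paper's route buys the stronger conclusion of irredundance in case (B) at the cost of an external citation. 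Your opening observation that $\K(\HH)$ with $\HH$ nonseparable is simple, scattered and nonseparable, so that Lemma \ref{existence_family_positive} alone cannot prove the theorem, is exactly why the paper cannot avoid its second case either.

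Two steps of your write-up need repair, though neither is fatal. First, in case (B) there is in general no quotient map $\pi\colon\A\to Q$: unless $I_{\rho+1}=\A$, the algebra $Q=I_{\rho+1}/I_\rho$ is only a subquotient of $\A$, namely an ideal of $\A/I_\rho$. The standard fix: each $\psi_\alpha$ is a pure state on the ideal $Q$ of $\A/I_\rho$, hence extends uniquely to a pure state $\hat\psi_\alpha$ of $\A/I_\rho$, and $\tau_\alpha:=\hat\psi_\alpha\circ\pi_\rho$, where $\pi_\rho\colon\A\to\A/I_\rho$ is the genuine quotient map, is a pure state of $\A$; since your positive contractive lifts $a_\beta$ can be taken inside $I_{\rho+1}$ and $\hat\psi_\alpha$ agrees with $\psi_\alpha$ on $Q$, the relations $\tau_\alpha(a_\beta)=\delta_{\alpha\beta}$ survive and your discreteness argument goes through verbatim. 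Second, your non-unital parenthetical stops too early: applying the unital theorem to $\tilde{\A}$ produces an uncountable almost irredundant set of projections \emph{in $\tilde{\A}$}, while the statement asks for one in $\A$. A descent argument is required, and it is precisely what the second half of the paper's proof supplies: it analyses which elements $(a,\lambda)\in\tilde{\A}$ are projections and arranges that the family consists of elements $(a_\alpha,0)$ with $a_\alpha$ a projection of $\A$, after which almost irredundance of $\{(a_\alpha,0):\alpha<\omega_1\}$ in $\tilde{\A}$ trivially restricts to almost irredundance of $\{a_\alpha:\alpha<\omega_1\}$ in $\A$.
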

\begin{proof}Suppose first that $\A$ is unital. Consider the Cantor-Bendixson sequence $(I_\alpha)_{\alpha<\beta}$ of $\A$. If $\beta<\omega_1$, then $\A$ has a quotient with an uncountable irredundant set, and therefore, $\A$ has an uncountable irredundant set of projections(see Proposition 3.16 of \cite{hida2018large}).
Suppose now that $\beta\geq \omega_1$. Then by Lemma \ref{existence_family_positive}, $\A$ has an uncountable positive semibiorthogonal system of pure states. 
To conclude, we apply Theorem \ref{pfateorema} and Theorem \ref{discretalmost}.

Suppose now that $\A$ is not unital. By the first part, if $\tilde{\A}$ is the unitization\footnote{Observe that if $\A$ is scattered, then its unitization is also scattered. See Proposition 2.4 of \cite{ghasemi2018noncommutative}.} of $\A$, then $\tilde{\A}$ has an uncountable almost irredundant set of projections $((a_\alpha, \lambda_\alpha))_{\alpha<\omega_1}$. Since $(a, \lambda)\in \tilde{\A}$ is a projection if and only if $(a, \lambda) = (0,1)$ or $\lambda = 0$ and $a$ is a projection of $\A$, we can assume that $((a_\alpha, \lambda_\alpha))_{\alpha<\omega_1} = ((a_\alpha, 0))_{\alpha<\omega_1}$, where $(a_\alpha)_{\alpha<\omega_1}$ is a family of projections in $\A$ and it is easy to see that $(a_\alpha)_{\alpha<\omega_1}$ is an uncountable almost irredundant set in $\A$.
\end{proof}

\begin{remark}Let $\A$ be a C*-algebra. Consider the following sentences:

\begin{enumerate}
\item $\A$ has an uncountable increasing sequence of ideals.
\item $\A$ has an uncountable positive semibiorthogonal system.
\item $P(\A)$ has an uncountable discrete set.
\item $\A$ has UBABS+.
\item $\A$ has an uncountable almost irredundant set.
\end{enumerate}
We have proved that $1) \Rightarrow 2)$ and $3) \Rightarrow  4) \Rightarrow 5)$ hold in ZFC (Lemma \ref{existence_family_positive} and Proposition \ref{discrete-positive}). Also,  assuming PFA, we have proved that $2) \Rightarrow 3)$ (Theorem \ref{pfateorema}).

Consider $\mathcal{K} = (2^\omega \times \{0,1\}, \tau_{ord})$ the Double arrow space. Then $\mathcal{K}$ is a compact Hausdorff space which is hereditarily separable and hereditarily Lindel\"of. If $\A=C(\mathcal{K})$, then $\A$ has an uncountable (of size $2^\omega$) nice biorthogonal system\footnote{A biorthogonal sytem $(f_\alpha, \tau_\alpha)_{\alpha<\kappa}$ in a Banach space of the form $C(K)$ is nice if for every $\alpha<\kappa$, there are two points $x_\alpha, y_\alpha\in K$ such taht $\tau_\alpha = \delta_{x_\alpha} - \delta_{y_\alpha}$. See \cite{dvzamonja2011ch}.}.

By Theorem 5.4 of \cite{somepiotr11}, since $\A$ has an uncountable nice biorthogonal system, it follows that $\A$ has an uncountable irredundant set (and in particular, an uncountable almost irredundant set).

Since $\mathcal{K}\equiv P(\A)$ is hereditarily separable, it follows that $P(\A)$ does not have an uncountable discrete set. This shows that $5)\not \Rightarrow 3)$.

Also, by Proposition 3.2 of \cite{granero1998convex}, a Banach space of the form $C(K)$ has an uncountable positive semibiorthogonal system if and only if $K$ is not hereditarily Lindel\"of. Since $\mathcal{K}$ is hereditarily Lindel\"of, it follows that $\A$ does not have an uncountable positive semibiorthogonal system. In particular, $5)\not \Rightarrow 2)$ (and therefore $5)\not \Rightarrow 1)$.

%$$\textrm{ideal} \Rightarrow \textrm{Positive semi-biorthogonal} \Rightarrow \textrm{Discrete set}\Rightarrow \textrm{ Almost irredundant}$$

\end{remark}

%\bibliography{../Bibligrafia/bibliografia}{}  % associado ao arquivo: 'bibliografia.bib'
%\bibliographystyle{plain}

\end{document}